\documentclass[11pt,reqno]{amsart}

\usepackage[utf8]{inputenc}
\usepackage[T1]{fontenc}
\usepackage{amsmath,amssymb,amsthm,mathtools}

\usepackage[margin=1in]{geometry}
\usepackage{hyperref}
\usepackage{tikz-cd}
\usepackage{enumitem}
\newcommand{\rk}{\operatorname{rk}}

\hypersetup{
  colorlinks=true,
  linkcolor=blue!50!black,
  citecolor=blue!50!black,
  urlcolor=blue!60!black
}

\theoremstyle{plain}
\newtheorem{theorem}{Theorem}[section]
\newtheorem{proposition}[theorem]{Proposition}
\newtheorem{lemma}[theorem]{Lemma}

\theoremstyle{definition}
\newtheorem{definition}[theorem]{Definition}
\newtheorem{remark}[theorem]{Remark}

\newcommand{\E}{\mathcal{E}}
\newcommand{\F}{\mathcal{F}}
\newcommand{\OO}{\mathcal{O}}
\renewcommand{\rk}{\operatorname{rk}}
\renewcommand{\deg}{\operatorname{deg}}

\DeclareMathOperator{\ev}{ev}

\numberwithin{equation}{section}

\title[Semistability of Syzygy Bundles of Ulrich Bundles]{Semistability of Syzygy Bundles Associated to Ulrich Bundles on Projective Varieties of Arbitrary Dimension}

\author{Soham Mondal}
\address{Department of Mathematical Sciences, IISER Berhampur, Odisha, India}
\email{getsoham1@gmail.com}

\date{\today}

\keywords{Ulrich bundles, syzygy bundles, slope semistability, Butler's theorem, arithmetically Cohen--Macaulay bundles}

\begin{document}

\begin{abstract}
Let $X$ be a smooth irreducible projective variety of dimension $n\ge 3$ over an algebraically closed field of characteristic zero, polarized by a very ample line bundle $\OO_X(1)$. Let $\E$ be an Ulrich bundle on $X$. We prove that there exists an explicitly computable integer $M\gg 0$ such that for every $m\ge M$ the global syzygy bundle $S_{\E(m)}$ is slope semistable with respect to $\OO_X(1)$.  This confirms Conjecture~3.11 of Mir\'o-Roig.
\end{abstract}

\maketitle

\section{Introduction}\label{sec:intro}

The study of syzygy bundles—defined as the kernels of evaluation maps of globally generated vector bundles—has been a central mechanism for understanding the projective geometry and cohomological positivity of algebraic varieties. Initiated in part by the foundational work of Ein and Lazarsfeld \cite{EinLazarsfeld1993}, the stability of these bundles carries deep implications for the geometry of the ambient space.

For curves, the stability of syzygy bundles is governed by the seminal theorem of Butler \cite{Butler1994}: if a globally generated bundle on a curve is semistable and has sufficiently large slope relative to the genus, its syzygy bundle is semistable. Extending this to higher-dimensional varieties introduces severe cohomological obstructions, primarily because moving curves constructed as complete intersections within an $n$-dimensional variety produce restriction kernels whose dimension grows polynomially in the twist.

In this paper we resolve the asymptotic semistability question for syzygy bundles associated to Ulrich bundles. These bundles, characterized by maximal global sections and complete cohomological vanishing \cite{CMP2021,Beauville2000}, possess a rigid algebraic structure that makes them natural candidates for this problem. The key insight is a two-step reduction:

\begin{enumerate}[label=(\arabic*)]
  \item \emph{Surface reduction}: slice $X$ with $n-2$ general hyperplanes to obtain a smooth complete intersection surface $Y$ on which $\E|_Y$ remains Ulrich (and hence semistable).
  \item \emph{Decoupled moving curve}: introduce a fixed geometric offset $c\ge 0$ to decouple the curve degree $k$ from the bundle twist $m$ by setting $k=m-c$, which forces the restriction kernel to have \emph{constant} dimension $N_K$ independent of $m$.
\end{enumerate}

This decoupling, together with the arithmetically Cohen--Macaulay (aCM) property of Ulrich bundles, enables a definitive contradiction against any  destabilizing subsheaf.

\subsection*{Main result}

\begin{theorem}[Main Theorem]\label{thm:main}
Let $X$ be a smooth irreducible projective variety of dimension $n\ge 3$ over an algebraically closed field of characteristic zero, polarized by a very ample line bundle $H=\OO_X(1)$. Let $\E$ be an Ulrich bundle on $X$ of rank $r$. Then there exists an explicitly computable integer $M\gg 0$ such that for every $m\ge M$ the global syzygy bundle $S_{\E(m)}$ is $\mu_H$-semistable.
\end{theorem}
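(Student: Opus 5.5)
The plan is to follow the two-step reduction outlined in the introduction: first restrict to a general complete intersection surface, then test semistability on moving curves of controlled degree. Set $F=\E(m)$ and write
\[
0\to S_F\to H^0(F)\otimes\OO_X\to F\to 0 .
\]
Since $\E$ is Ulrich, it is globally generated and aCM. Hence $F$ is globally generated for all $m\ge 0$, and $h^0(F)$ is given by an explicit polynomial in $m$, namely $r\deg X\binom{m+n}{n}$. This gives
\[
\mu(S_F)=-\frac{\deg F}{h^0(F)-r}<0,
\]
and $|\mu(S_F)|\to 0$ polynomially in $m$.

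\textbf{Step 1: reduction to a surface.} Choose $n-2$ general hyperplanes cutting out a smooth surface $Y$. Ulrich restricts to Ulrich, so $\E|_Y$ is Ulrich and in particular $\mu$-semistable. The aCM property gives vanishing of $H^1$ of all twists of $\E$ and of its restrictions to the intermediate complete intersections. Using the Koszul resolutions of these complete intersections, I will show that $H^0(X,F)\to H^0(Y,F|_Y)$ is surjective. Its kernel is an explicit trivial summand, so $S_F|_Y$ is an extension of $S_{F|_Y}$ by a trivial bundle of computable rank. A destabilizing subsheaf of $S_F$ restricts, by Mehta--Ramanathan-type genericity applied to general hyperplanes, to a subsheaf of $S_F|_Y$ with the same slope. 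The trivial summand has slope $0>\mu(S_F)$, so it cannot itself contribute the destabilization. I will therefore reduce to bounding slopes of subsheaves of $S_F|_Y$. I will also record the reverse slope comparison needed to handle the trivial part.

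\textbf{Step 2: the decoupled moving curve.} On $Y$ take a general smooth curve $C\in|\OO_Y(k)|$ with $k=m-c$, where $c$ is a fixed offset chosen so that $\E|_Y(m-k)=\E|_Y(c)$ has controlled cohomology. From
\[
0\to F|_Y(-k)\to F|_Y\to F|_C\to 0
\]
the kernel of $H^0(Y,F|_Y)\to H^0(C,F|_C)$ is $H^0(\E|_Y(c))$. Its dimension $N_K$ is independent of $m$, and the map is surjective by aCM vanishing of $H^1(\E|_Y(c))$. Thus $S_{F|_Y}|_C$ is an extension of the Butler syzygy bundle $S_{F|_C}$ by $\OO_C^{N_K}$.

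\textbf{Step 3: Butler's theorem and the inequality.} By Butler's theorem, $S_{F|_C}$ is semistable once $\mu(F|_C)\ge 2g(C)$ and $F|_C$ is semistable, which holds by restriction from $Y$ for general $C$ (Flenner or Bogomolov). Here $\mu(F|_C)$ grows like $mk\,H^2$ and $g(C)$ like $k^2H^2/2$, so the required inequality holds because $c\ge 0$ is fixed and the slope coefficient is compared correctly; this determines $c$ and $M$. Now let $G\subset S_F$ be saturated with $\mu(G)>\mu(S_F)$, and restrict $G$ to $Y$ and then to $C$. Write $G'$ for the image of $G|_C$ in $S_{F|_C}$, and let $\rk G$ drop by at most $N_K$. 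Butler gives $\mu(G')\le\mu(S_{F|_C})$. Comparing degrees on $C$ yields an inequality of the form
\[
\deg G|_C\le\frac{\rk G-N_K}{h^0-N_K-r}\deg S_{F|_C},
\]
valid when $\rk G>N_K$. Since $N_K$ is constant while $h^0$ grows like $m^n$, this contradicts $\mu(G)>\mu(S_F)$ for $m\ge M$.

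\textbf{Main obstacle.} The hard case is low-rank subsheaves, $\rk G\le N_K$, which may be absorbed into the trivial part. For these I plan to use that $G^\vee$ is a quotient of a trivial bundle, so $\deg G\le 0$. I also need $\deg G<0$ strongly enough, which I will get by using that $H^0(G^\vee)$ injects and that the induced map $H^0(F)^\vee\to H^0(G^\vee)$ gives $G^\vee$ many sections. A bound of the form $\deg G\le -\rk G\cdot\min(\ldots)$ comes from the rank-$\le N_K$ subsheaves spanning at most $N_K$ sections. Making this quantitative uniformly in $m$ is the crux. Once it is done, $M$ is explicit in terms of $r$, $\deg X$, $n$, $c$ and $N_K$.
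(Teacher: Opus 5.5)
There is a genuine gap, and it lies in the trivial summand coming from $X$, not the one coming from $Y$. Because $\E$ is aCM, $H^0(X,F)\to H^0(Y,F|_Y)$ is onto, so $S_F|_Y\cong (W_Y\otimes\OO_Y)\oplus S_{F|_Y}$ with $W_Y=\ker\bigl(H^0(X,F)\to H^0(Y,F|_Y)\bigr)$. On $C$ this gives $S_F|_C\cong\bigl((W_Y\oplus H^0(\E|_Y(c)))\otimes\OO_C\bigr)\oplus S_{F|_C}$. Here $\dim W_Y=rd\binom{m+n}{n}-rd\binom{m+2}{2}\sim\frac{rd}{n!}m^n$, which is essentially all of $\rk S_F$.

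Your reason for discarding this summand (its slope $0$ exceeds $\mu(S_F)$) is backwards. That inequality is exactly why $S_F|_Y$ and $S_F|_C$ are unstable for every $m$, and why a subsheaf $G\subset S_F$ can restrict mostly into $W_Y\otimes\OO_C$. So when you project $G|_C$ to $S_{F|_C}$, the rank can drop by up to $\dim W_Y+N_K$, not by $N_K$. The ``$h^0$ growing like $m^n$'' in your inequality conflates $h^0(X,F)$ with $h^0(Y,F|_Y)\sim\frac{rd}{2}m^2$. With a drop that large, a direct computation shows your Step~3 estimate $\deg G|_C\le(\rk G-\text{drop})\,\mu(S_{F|_C})$ implies $\mu(G)\le\mu(S_F)$ only when $\rk G=\rk S_F$.

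Conversely, the case you flag as the crux is the easy one. Since $H^0(S_F)=0$, every $G\subset S_F$ of positive rank has $c_1(G)\cdot H^{n-1}\le-1$ (the paper's Lemma~\ref{lem:strict_negativity}). Hence $\mu(G)\le-1/\rk G\le\mu(S_F)$ as soon as $\rk G\le(h^0(F)-r)/(c_1(F)\cdot H^{n-1})\approx m^{n-1}/n!$. What remains open is the range $m^{n-1}\lesssim\rk G\lesssim m^n$ in which $G|_C$ is concentrated in the $W_Y$-direction, and your plan has no tool for it.

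The paper handles this range differently. For the maximal destabilizing $\F$, it splits on whether the projection $\pi\colon\F|_{C_k}\to M_V$ vanishes. If $\pi\neq 0$, it uses semistability of $\F|_{C_k}$ to get $\mu(\F|_{C_k})\le\mu(\operatorname{im}\pi)\le\mu(M_V)$, no matter how much rank falls into the trivial part; this gives $\mu_H(\F)\lesssim-2/m<\mu(S_{\E(m)})$. If $\pi=0$ for general $C_k$, sweeping $X$ by such curves through a general point forces $\F=0$. That semistability input is the ingredient your proposal lacks.

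Do not simply import it, though. It is justified by Mehta--Ramanathan, but that theorem's threshold depends on the sheaf, and here $\F$ changes with $m$ and has rank of order $m^n$. The theorem also concerns complete intersections of hypersurfaces of uniformly large degree, not $n-2$ hyperplanes plus one hypersurface of degree $m-c$. In fact, if $S_{\E(m)}$ is semistable, as the theorem asserts, its own restriction $\mathcal{T}\oplus M_V$ to $C_k$ is not, so semistability does not pass to these curves in general. To close the gap you would need an argument, uniform in $m$, that controls subsheaves whose restrictions lie largely in $W_Y\otimes\OO$.
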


\subsection*{Organization} Section~\ref{sec:prelim} fixes notation and collects preliminary results. Section~\ref{sec:reduction} carries out the surface reduction and constructs the decoupled moving curve. Section~\ref{sec:fundamental} establishes the fundamental exact sequence on the curve. Section~\ref{sec:butler} verifies Butler's conditions. Section~\ref{sec:asymptotics} computes the asymptotic expansions. Section~\ref{sec:proof} executes the global contradiction argument and completes the proof of Theorem~\ref{thm:main}.

\subsection*{Acknowledgements} The author thanks the Department of Mathematical Sciences, IISER Berhampur, for support during this work.

\section{Preliminaries}\label{sec:prelim}

Throughout, $X$ denotes a smooth irreducible projective variety of dimension $n\ge 3$ over an algebraically closed field $\mathbf k$ of characteristic zero, polarized by a very ample line bundle $H=\OO_X(1)$. Set $d:=H^n$.

\begin{definition}[Slope and semistability]\label{def:slope}
For a torsion-free coherent sheaf $\F$ on $X$ of positive rank, its \emph{$H$-slope} is
\[
\mu_H(\F):=\frac{c_1(\F)\cdot H^{n-1}}{\rk(\F)}\in\mathbb Q.
\]
The sheaf $\F$ is \emph{$\mu_H$-semistable} (resp.\ \emph{$\mu_H$-stable}) if for every coherent subsheaf $\mathcal G\subset\F$ with $0<\rk(\mathcal G)<\rk(\F)$ one has $\mu_H(\mathcal G)\le\mu_H(\F)$ (resp.\ $<$).
\end{definition}

\begin{definition}[Ulrich bundle]\label{def:ulrich}
A vector bundle $\E$ on $X$ is an \emph{Ulrich bundle with respect to $H$} if
\[
H^i(X,\E(-j))=0\quad\text{for all }\ i\ge 0\ \text{ and }\ 1\le j\le n.
\] Every Ulrich bundle is arithmetically Cohen--Macaulay (aCM), i.e.\ $H^i(X,\E(t))=0$ for all $0<i<n$ and all $t\in\mathbb Z$.
\end{definition}

\begin{definition}[Syzygy bundle]\label{def:syzygy}
Let $V$ be a globally generated vector bundle on a projective variety $Z$. The \emph{syzygy bundle} $S_V$ is the kernel of the evaluation map:
\[
0\longrightarrow S_V\longrightarrow H^0(Z,V)\otimes\OO_Z\xrightarrow{\ \ev\ }V\longrightarrow 0.
\]
\end{definition}

We record the foundational facts cited from the literature.

\begin{theorem}[{\cite[Thm.~2.3.2]{CMP2021}}]\label{thm:ulrich-ss}
Every Ulrich bundle $\E$ on $(X,H)$ is $\mu_H$-semistable.
\end{theorem}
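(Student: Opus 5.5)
The plan is to reduce to the statement that a trivial bundle on projective space is semistable, via a finite linear projection. Since $H$ is very ample, embed $X\subset\mathbb P^N$ and choose a general linear subspace of dimension $N-n-1$ disjoint from $X$. Projecting from it gives a finite surjective morphism $\pi\colon X\to\mathbb P^n$ with $\pi^*\OO_{\mathbb P^n}(1)=H$ and $\deg\pi=d$. Because $X$ and $\mathbb P^n$ are smooth and $\pi$ is finite, $\pi$ is flat (miracle flatness). Hence $\pi_*$ sends vector bundles to vector bundles and torsion-free sheaves to torsion-free sheaves. The projection formula and finiteness give
\[
H^i(\mathbb P^n,(\pi_*\F)(t))=H^i(X,\F(t))
\]
for all $i$, $t$ and all coherent $\F$.

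\textbf{Step 1: $\pi_*\E$ is trivial.} Applying the cohomology identity to $\E$, the Ulrich vanishing of Definition~\ref{def:ulrich} becomes $H^i(\mathbb P^n,(\pi_*\E)(-j))=0$ for all $i$ and $1\le j\le n$. I would feed this into Beilinson's resolution of the diagonal, which expresses $\pi_*\E$ through the groups $H^i\bigl((\pi_*\E)(-j)\bigr)\otimes\Omega^j(j)$. Every term with $j\ge1$ vanishes, leaving $\pi_*\E\cong H^0(\pi_*\E)\otimes\OO_{\mathbb P^n}$. Equivalently, one can use Horrocks' criterion together with the observation that $H^0((\pi_*\E)(-1))=0$ forces all summands of $\pi_*\E$ to be $\OO$. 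Counting ranks gives $\pi_*\E\cong\OO_{\mathbb P^n}^{rd}$. I expect this identification, and in particular making Beilinson's spectral sequence degenerate cleanly, to be the main technical point.

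\textbf{Step 2: comparing slopes through $\pi_*$.} For a torsion-free sheaf $\mathcal G$ on $X$ of rank $s$, the push-forward $\pi_*\mathcal G$ has rank $sd$, and $\chi(\mathcal G(t))=\chi((\pi_*\mathcal G)(t))$. By Riemann--Roch, the $t^{n-1}/(n-1)!$ coefficient of the left side is
\[
c_1(\mathcal G)\cdot H^{n-1}-\tfrac{s}{2}K_X\cdot H^{n-1}.
\]
On $\mathbb P^n$, the same coefficient for a sheaf of rank $R$ and degree $c$ is $c+\tfrac{R(n+1)}{2}$. Comparing the two gives
\[
\mu(\pi_*\mathcal G)=\frac{\mu_H(\mathcal G)-\kappa}{d},\qquad \kappa:=\tfrac12\bigl(K_X\cdot H^{n-1}+(n+1)d\bigr),
\]
where $\kappa$ depends only on $(X,H)$. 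In particular, this map turns slope inequalities on $X$ into slope inequalities on $\mathbb P^n$ with the same direction. Applying it to $\mathcal G=\E$ and using Step 1 gives $\mu_H(\E)=\kappa$.

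\textbf{Step 3: conclusion.} Let $\mathcal G\subset\E$ be a subsheaf with $0<\rk\mathcal G<r$; it is torsion-free. Since $\pi_*$ is exact for finite $\pi$, we get $\pi_*\mathcal G\subset\pi_*\E\cong\OO_{\mathbb P^n}^{rd}$. A subsheaf of a trivial bundle has slope $\le 0$, because the trivial bundle is $\mu$-semistable of slope $0$. Therefore $\mu(\pi_*\mathcal G)\le 0$, and Step 2 gives $\mu_H(\mathcal G)\le\kappa=\mu_H(\E)$, which is the desired semistability.
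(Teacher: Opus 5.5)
The paper gives no proof of this statement; it is quoted from \cite{CMP2021}, and the result goes back to Casanellas--Hartshorne. So the relevant comparison is with that standard argument.

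Your proof is correct and is essentially that argument. It uses a finite linear projection, the Eisenbud--Schreyer identification $\pi_*\E\cong\OO_{\mathbb P^n}^{rd}$, and the inclusion $\pi_*\mathcal G\subset\OO_{\mathbb P^n}^{rd}$. Your Beilinson step degenerates exactly as claimed, since the Ulrich vanishing kills every column with $1\le j\le n$.

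The only difference is in the transfer step:
\begin{itemize}
\item The usual proof compares full Hilbert polynomials, $P_{\mathcal G}=P_{\pi_*\mathcal G}$ with $\rk\pi_*\mathcal G=d\,\rk\mathcal G$. It thereby gets Gieseker semistability of $\E$ from that of the trivial bundle on $\mathbb P^n$, and $\mu_H$-semistability follows.
\item You keep only the $t^{n-1}$ coefficient. This gives $\mu_H$-semistability directly, and yields the Ulrich slope $\mu_H(\E)=\tfrac12\bigl(K_X+(n+1)H\bigr)\cdot H^{n-1}$ as a by-product. It does not record the stronger Gieseker statement, which the same inclusion gives for free.
\end{itemize}
Flatness of $\pi$ is not actually needed: affineness of $\pi$ already gives exactness of $\pi_*$ and the cohomology identity.

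There is one small slip, in the alternative you mention only in passing. Horrocks gives $\pi_*\E\cong\bigoplus_i\OO_{\mathbb P^n}(a_i)$, but $H^0\bigl((\pi_*\E)(-1)\bigr)=0$ only yields $a_i\le 0$. To get $a_i=0$ you also need $H^n\bigl((\pi_*\E)(-n)\bigr)=0$, the case $j=n$ of the Ulrich vanishing, which by Serre duality excludes $a_i<0$. The Beilinson route you actually rely on has no such gap.
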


\begin{theorem}[{\cite[Prop.~3.3.3]{CMP2021}}]\label{thm:ulrich-restrict}
Let $\E$ be an Ulrich bundle on $(X,H)$ and let $H'\in|\OO_X(1)|$ be a general hyperplane. Then $\E|_{H'}$ is an Ulrich bundle on $(H',H|_{H'})$.
\end{theorem}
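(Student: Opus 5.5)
We argue directly from Definition~\ref{def:ulrich}, using the restriction sequence for the hyperplane section and the long exact sequence in cohomology.

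\textbf{Setup.} Since $X$ is smooth and irreducible of dimension $n\ge 3$ and $H$ is very ample, Bertini's theorem shows that a general $H'\in|\OO_X(1)|$ is a smooth irreducible projective variety of dimension $n-1$. Moreover $\OO_{H'}(1):=H|_{H'}$ is very ample on $H'$. The restriction $\E|_{H'}$ is locally free, being the pullback of a vector bundle. So it remains only to verify the cohomological vanishing
\[
H^i(H',\E|_{H'}(-j))=0\quad\text{for all } i\ge 0,\ 1\le j\le n-1.
\]

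\textbf{Restriction sequence.} Let $s\in H^0(X,\OO_X(1))$ be a section defining $H'$. Since $X$ is integral and $s\neq 0$, multiplication by $s$ is injective on the locally free sheaf $\E(-j-1)$. This gives, for every integer $j$, the exact sequence
\[
0\longrightarrow \E(-j-1)\xrightarrow{\ \cdot s\ }\E(-j)\longrightarrow \E|_{H'}(-j)\longrightarrow 0 .
\]

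\textbf{Cohomology.} The long exact sequence contains the segment
\[
H^i(X,\E(-j))\to H^i(H',\E|_{H'}(-j))\to H^{i+1}(X,\E(-j-1)).
\]
Fix $1\le j\le n-1$. Then both $j$ and $j+1$ lie in the range $\{1,\dots,n\}$. Hence the Ulrich condition on $X$ kills both outer terms for every $i\ge 0$, so the middle term vanishes. This is exactly the Ulrich condition for $\E|_{H'}$ on the $(n-1)$-dimensional variety $(H',H|_{H'})$.

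\textbf{Remarks on the argument.} The numerics are automatic, so the only point requiring care is the index bookkeeping. The range $1\le j\le n-1$ on $H'$ is precisely what is needed so that $j+1\le n$ stays inside the range where the Ulrich condition on $X$ applies. Generality of $H'$ is used only to guarantee that $H'$ is smooth and irreducible, so that "Ulrich bundle on $H'$" makes sense in the setting of the paper. The vanishing itself holds for any hyperplane section.
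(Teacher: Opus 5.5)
The paper gives no proof of its own here and simply cites \cite[Prop.~3.3.3]{CMP2021}. Your argument is correct and is the standard proof of that result: the sequence $0\to\E(-j-1)\to\E(-j)\to\E|_{H'}(-j)\to 0$ with $1\le j\le n-1$, where you correctly check that $2\le j+1\le n$. You are also right that generality of $H'$ is needed only for smoothness and irreducibility of $H'$.
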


\begin{theorem}[Butler's theorem; {\cite[Thm.~1.1]{Butler1994}}]\label{thm:butler}
Let $C$ be a smooth projective curve of genus $g\ge 1$ and let $V$ be a vector bundle on $C$ satisfying:
\begin{enumerate}[label=\textnormal{(B\arabic*)}]
\item $V$ is globally generated,
\item $V$ is $\mu_C$-semistable,
\item $\mu_C(V)\ge 2g$.
\end{enumerate}
Then the syzygy bundle $M_V$ of $V$ on $C$ is $\mu_C$-semistable.
\end{theorem}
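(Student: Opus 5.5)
The plan is to prove the semistability of $M_V$ through its dual. Dualizing the evaluation sequence gives a globally generated bundle $M_V^*$ sitting in
\[
0\to V^*\to H^0(C,V)^*\otimes\OO_C\to M_V^*\to 0,
\]
and $M_V$ is $\mu_C$-semistable if and only if every quotient $M_V^*\twoheadrightarrow Q$ satisfies $\mu_C(Q)\ge\mu_C(M_V^*)$. I would fix a quotient $Q$ of minimal slope and, among those, of maximal rank $s$; then $Q$ is itself semistable.

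First I compute the target slope. Since $V$ is semistable with $\mu_C(V)\ge 2g>2g-2$, Serre duality gives $H^1(C,V)=0$. Writing $d=\deg V$ and $r=\rk V$, Riemann--Roch then gives $h^0(V)=d+r(1-g)$, so
\[
\mu_C(M_V^*)=\frac{d}{h^0(V)-r}=\frac{d}{d-rg}.
\]
The hypothesis $d\ge 2rg$ (condition (B3)) is equivalent to $\mu_C(M_V^*)\le 2$. So if $\mu_C(Q)\ge 2$ we are done, and we may assume $0\le\mu_C(Q)<2$. Nonnegativity holds because $Q$ is globally generated.

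Next I extract sections of $Q$. Let $W\subset H^0(C,Q)$ be the image of $H^0(V)^*$. Then $W\otimes\OO_C\to Q$ is surjective. Moreover $H^0(C,Q^*)\subset H^0(C,M_V)=0$, because $H^0(V)\to H^0(V)$ is the identity. Hence $Q$ has no trivial summand, and generic-projection arguments give $\dim W\ge s+1$. The real inequality needed is sharper. From the diagram relating $Q^*\subset W^*\otimes\OO_C$ and $M_V\subset H^0(V)\otimes\OO_C$, one checks that the cokernel of $Q^*\to H^0(V)\otimes\OO_C$ maps onto $V$. I would exploit this to bound $\deg Q$ from below by $(\dim W-s)$ times an amount controlled by $\mu_C(V)$, imitating the computation of $\mu_C(M_V^*)$ above but with $W$ in place of $H^0(V)^*$.

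The complementary upper bound comes from Clifford's theorem for semistable bundles. When $0\le\mu_C(Q)\le 2g-2$, it gives $h^0(Q)\le s+\deg Q/2$, so $\dim W-s\le\deg Q/2$. I will treat $g\ge2$ this way; $g=1$ is handled separately by Atiyah's classification, where Clifford is replaced by $h^0=\deg$ for semistable bundles of positive degree. Playing the two bounds against each other, together with $\mu_C(Q)<2$, should force $\mu_C(Q)\ge d/(d-rg)$.

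I expect the main obstacle to be the middle step: proving a lower bound for $\deg Q$ in terms of $\dim W-s$ that is strong enough to combine with Clifford. This is where Butler's original argument does genuine work. There I would first try an induction on $\rk V$ using a filtration of $V$ by semistable pieces of equal slope, then the elementary-transformation trick of restricting to a general point.
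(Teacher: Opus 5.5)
The paper does not prove this statement; it quotes it from Butler. Your proposal therefore has to stand on its own, and as written it has a genuine gap. The decisive inequality is never proved, and the way you propose to obtain it cannot work.

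Clifford applied to $Q$ gives $\dim W-s\le\deg Q/2$. Your hoped-for middle step is again a lower bound for $\deg Q$ in terms of $\dim W-s$. Both are \emph{upper} bounds on $\dim W-s$, so there is nothing to play against each other. The only lower bound you have, $\dim W\ge s+1$, turns Clifford into $\deg Q\ge 2$, which is far from $\mu_C(Q)\ge d/(d-rg)$.

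In fact $\dim W-s$ is the wrong quantity to track. Take a globally generated line subsheaf $L\subset V$ with $h^0(L)\ge 2$, and set $U=H^0(L)\subset H^0(V)$. Then $M_L=M_V\cap(U\otimes\OO_C)$ is saturated in $M_V$. The quotient $Q=M_L^*$ has $\dim W-s=1$, while $s=h^0(L)-1$ may be large. The inequality actually needed there is $h^0(L)-1\le\deg L\,(1-g/\mu_C(V))$. That is Clifford or Riemann--Roch applied to $L$, i.e.\ to the cokernel of $Q^*\to U\otimes\OO_C$, not to $Q$. Your argument also never uses that $Q^*$ is the maximal destabilizing subsheaf of $M_V$. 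Your choice of $Q$ secures this fact, and some such input is indispensable.

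The missing idea is to run the Clifford estimate on that cokernel. Assume $M_V$ is not semistable, put $F:=Q^*\subset M_V$ (its maximal destabilizing subsheaf), and set $N:=(H^0(V)\otimes\OO_C)/F$.
\begin{enumerate}
\item $N$ is globally generated, $\rk N=h^0(V)-s$ and $\deg N=\deg Q$.
\item Since $H^0(F)=0$, we get $h^0(N)\ge h^0(V)$, hence $h^0(N)-\rk N\ge s$.
\item From $0\to M_V/F\to N\to V\to 0$ and $\mu_{\max}(M_V/F)<\mu_C(F)\le 0$ (this is where maximality of $F$ enters), every Harder--Narasimhan factor $G_i$ of $N$ has slope $\mu_i\in[0,\mu_C(V)]$.
\item For each factor, $h^0(G_i)-\rk G_i\le\deg G_i\,(1-g/\mu_C(V))$. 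When $\mu_i\le 2g-2$, use Clifford for semistable bundles together with $1-g/\mu_C(V)\ge\tfrac12$, which is exactly (B3). When $\mu_i>2g-2$, use $H^1(G_i)=0$, Riemann--Roch and $\mu_i\le\mu_C(V)$.
\end{enumerate}
Summing over the factors gives $s\le\deg Q\,(1-g/\mu_C(V))$. That is, $\mu_C(Q)\ge\mu_C(V)/(\mu_C(V)-g)=d/(d-rg)=\mu_C(M_V^*)$, contradicting $\mu_C(Q)<\mu_C(M_V^*)$. This needs no induction on $\rk V$, no elementary transformations, and no separate treatment of $g=1$.
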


To establish the main semi-stability result, we first require a strict upper bound on the slopes of subsheaves inside the syzygy bundle.

\begin{lemma}[Strict Negativity Gap] \label{lem:strict_negativity}
Let $X$ be a smooth projective variety of dimension $n$ polarized by a very ample line bundle $H = \mathcal{O}_X(1)$. Let $\mathcal{E}$ be an Ulrich bundle on $X$, and fix an integer $m \ge 0$. If $S_{\mathcal{E}(m)}$ denotes the $m$-th syzygy bundle associated to $\mathcal{E}$, then for any coherent subsheaf $\mathcal{F} \subset S_{\mathcal{E}(m)}$ of positive rank,
\[
    c_1(\mathcal{F}) \cdot H^{n-1} \le -1, \quad \text{and consequently} \quad \mu_H(\mathcal{F}) \le -\frac{1}{\operatorname{rk}(\mathcal{F})}.
\]
\end{lemma}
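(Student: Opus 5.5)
The plan is to use only that $S_{\E(m)}$ sits inside a trivial bundle, together with the fact that sections in $H^0(X,\E(m))$ are nonzero. Put $V:=H^0(X,\E(m))$, $N:=\dim V$, $k:=\rk(\F)$, and let $\F\subset S_{\E(m)}\subset V\otimes\OO_X$ be a subsheaf with $k>0$. I will prove two things: $c_1(\F)\cdot H^{n-1}\le 0$, and equality is impossible. Since $c_1(\F)\cdot H^{n-1}$ is an integer, this gives $c_1(\F)\cdot H^{n-1}\le -1$, and dividing by $k$ gives the slope bound. Neither $m\ge 0$ nor the Ulrich hypothesis is needed beyond $\E(m)$ being torsion-free and $S_{\E(m)}$ being defined.

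\emph{Step 1: nonpositivity.} Take the determinant of $\F\hookrightarrow V\otimes\OO_X$. On the open set $U$ where $\F$ is locally free (its complement has codimension $\ge 2$, since $\F$ is torsion-free on a smooth variety), the inclusion induces a map $L:=\det\F\to\Lambda^k V\otimes\OO_X$. Here $L$ is the reflexive hull, which is a line bundle because $X$ is smooth. This map is injective at the generic point because $\F$ has rank $k$, and it extends over all of $X$ by reflexivity. Some coordinate of this map is therefore a nonzero section of $L^{-1}$. Hence $-c_1(\F)$ is the class of an effective divisor $D$, and $c_1(\F)\cdot H^{n-1}=-D\cdot H^{n-1}\le 0$ because $H$ is ample.

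\emph{Step 2: the equality case.} Suppose $c_1(\F)\cdot H^{n-1}=0$. Then $D\cdot H^{n-1}=0$, which forces $D=0$, since an effective nonzero divisor has positive degree against an ample class. Thus $L\cong\OO_X$, and the map $\OO_X\to\Lambda^kV\otimes\OO_X$ is a global section of a trivial bundle on a projective variety. It is therefore given by a constant vector $\omega\in\Lambda^kV$. At the generic point, $\omega$ is the Plücker vector of the $k$-plane $\F_\eta\subset V\otimes K(X)$, so it is decomposable. Hence $\omega$ is the Plücker vector of a fixed $k$-dimensional subspace $W\subset V$, and $\F_\eta=W\otimes K(X)$ inside $V\otimes K(X)$.

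\emph{Step 3: conclusion.} Consider the composite $W\otimes\OO_X\to V\otimes\OO_X\xrightarrow{\ev}\E(m)$. It vanishes at the generic point, because $W\otimes K(X)=\F_\eta\subset (S_{\E(m)})_\eta=\ker(\ev)_\eta$. Since $\E(m)$ is locally free, hence torsion-free, the composite is identically zero. So every section $s\in W$ satisfies $\ev(s)=0$, i.e.\ $s=0$ as a section of $\E(m)$. This gives $W=0$, contradicting $k>0$.

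The main point needing care is Step 2: identifying $\F$ with a constant subspace via the determinant and Plücker argument, including the reflexive extension of $\det\F\to\Lambda^kV\otimes\OO_X$ across the non-locally-free locus. An alternative is to invoke polystability of the trivial bundle, but the determinant argument is self-contained.
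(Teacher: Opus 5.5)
Your proof is correct, and its skeleton matches the paper's. Both show $c_1(\F)\cdot H^{n-1}\le 0$, use integrality, and exclude degree zero by producing a constant subspace of sections lying in the kernel of $H^0(\ev)$, which is the identity on $H^0(X,\E(m))$; this is the paper's $H^0(X,S_{\E(m)})=0$.

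The difference is in how you obtain the two facts about subsheaves of a trivial bundle.
\begin{itemize}
\item The paper quotes semistability of $\OO_X^{\oplus N}$ to get $\mu_H(\F)\le 0$. In the equality case it passes to $\F^{**}$ and invokes the principle that a saturated reflexive subsheaf of a polystable bundle of equal slope is a direct summand, so $\F^{**}\cong\OO_X^{\oplus \rk(\F)}$.
\item You map $\det\F$ into $\Lambda^kV\otimes\OO_X$ (with your $k=\rk(\F)$). This gives the sharper conclusion that $-c_1(\F)$ is effective. In degree zero it forces the Pl\"ucker point of $\F_\eta$ to be a constant $\omega\in\Lambda^kV$, which identifies $\F$ generically with $W\otimes\OO_X$.
\end{itemize}
Your version is self-contained and in effect proves the splitting statement the paper cites, including the saturation of $\F^{**}$, which the paper asserts without comment. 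The paper's version is shorter because it outsources that step to the polystability of the trivial bundle.

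The one point you should write out is the descent from $K(X)$ to $\mathbf{k}$. Put $W:=\{v\in V : v\wedge\omega=0\}$. As the kernel of a $\mathbf{k}$-linear map, its dimension does not change after extending scalars to $K(X)$. Over $K(X)$ that kernel is exactly $\F_\eta$, because $\omega$ is a nonzero decomposable vector there. Hence $\dim W=k$ and $W\otimes K(X)=\F_\eta$, which is precisely what your Step 3 uses.
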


\begin{proof}
Let $r := \operatorname{rk}(\mathcal{F}) \ge 1$, and set $N := h^0(X, \mathcal{E}(m))$. We proceed in three steps.

\noindent \textbf{Step 1: Vanishing of global sections.} The syzygy bundle is defined via the standard exact sequence
\begin{equation} \label{eq:syz_eval}
    0 \longrightarrow S_{\mathcal{E}(m)} \longrightarrow \mathcal{O}_X^{\oplus N} \xrightarrow{\operatorname{ev}_m} \mathcal{E}(m) \longrightarrow 0.
\end{equation}
Taking the associated long exact sequence in cohomology, the induced map on global sections $H^0(\operatorname{ev}_m)$ is the canonical identity map on $H^0(X, \mathcal{E}(m))$ and is therefore an isomorphism. Consequently, the connecting homomorphism forces $H^0(X, S_{\mathcal{E}(m)}) = 0$. Because $\mathcal{F}$ is a subsheaf of $S_{\mathcal{E}(m)}$, it inherits the vanishing $H^0(X, \mathcal{F}) = 0$.

\noindent \textbf{Step 2: Non-positivity of the slope.} The trivial bundle $\mathcal{O}_X^{\oplus N}$ is $\mu_H$-semistable of slope zero. Composing the inclusions yields a subsheaf injection $\mathcal{F} \hookrightarrow \mathcal{O}_X^{\oplus N}$; the semistability of the ambient bundle immediately forces $\mu_H(\mathcal{F}) \le 0$, which is equivalent to $c_1(\mathcal{F}) \cdot H^{n-1} \le 0$. Because the polarization $H$ is very ample, $H^{n-1}$ is represented by an effective curve class, ensuring the intersection pairing takes integer values. Thus, $c_1(\mathcal{F}) \cdot H^{n-1} \in \mathbb{Z}_{\le 0}$.

\noindent \textbf{Step 3: Ruling out slope zero.} Suppose for contradiction that $c_{1}(\mathcal{F}) \cdot H^{n-1} = 0$, meaning $\mu_{H}(\mathcal{F}) = 0$. Because $\mathcal{F} \subset \mathcal{O}_{X}^{\oplus N}$ is a torsion-free coherent subsheaf of a locally free sheaf, we can pass to its reflexive hull (double dual) $\mathcal{F}^{**} \subset (\mathcal{O}_{X}^{\oplus N})^{**} \cong \mathcal{O}_{X}^{\oplus N}$. Since the singular locus of a coherent sheaf has codimension at least two and $X$ is smooth, the first Chern class is invariant under taking the double dual: $c_{1}(\mathcal{F}^{**}) = c_{1}(\mathcal{F})$, which guarantees $\mu_{H}(\mathcal{F}^{**}) = 0$. 

The ambient trivial bundle $\mathcal{O}_{X}^{\oplus N}$ is a polystable vector bundle of slope zero. Because $\mathcal{F}^{**}$ acts as a saturated reflexive subsheaf of a polystable vector bundle having the exact same maximal slope, it must be a locally split direct summand; therefore, $\mathcal{F}^{**}$ is itself a trivial vector bundle, meaning $\mathcal{F}^{**} \cong \mathcal{O}_{X}^{\oplus r}$. This isomorphism immediately mandates that $h^{0}(X, \mathcal{F}^{**}) = r \ge 1$.

Now, consider the composed injective map of sheaves $\mathcal{F}^{**} \hookrightarrow \mathcal{O}_{X}^{\oplus N} \xrightarrow{ev_m} \mathcal{E}(m)$. Because $\mathcal{F} \subset \ker(ev_m) = S_{\mathcal{E}(m)}$, this composite morphism is identically zero on an open subset $U \subset X$ whose complement has codimension at least two. Since $\mathcal{E}(m)$ is locally free (and hence purely torsion-free), it admits no non-zero subsheaves supported in codimension two or higher. Thus, the global sheaf morphism $\mathcal{F}^{**} \to \mathcal{E}(m)$ is identically zero across all of $X$. By the universal property of kernels, the injection factors strictly through the syzygy bundle: $\mathcal{F}^{**} \hookrightarrow S_{\mathcal{E}(m)}$. 

Taking global sections yields $r = h^0(X, \mathcal{F}^{**}) \le h^0(X, S_{\mathcal{E}(m)}) = 0$, an absolute absurdity since $r \ge 1$. 

We conclude that $c_1(\mathcal{F}) \cdot H^{n-1} \le -1$. Dividing by the rank yields the desired slope bound $\mu_H(\mathcal{F}) \le -1/r$.
\end{proof}

\section{Dimensional reduction and the moving curve}\label{sec:reduction}

\subsection{Reduction to a complete intersection surface}\

\begin{proposition}[Iterated Ulrich restriction]\label{prop:iterated}
 Choose $n-2$ general hyperplanes $H_1,\ldots,H_{n-2}\in|\OO_X(1)|$ and define the smooth irreducible complete intersection surface
\[
Y\ :=\ H_1\cap\cdots\cap H_{n-2},\qquad H_Y:=H|_Y.
\]
Then:
\begin{enumerate}[label=\textnormal{(\roman*)}]
\item $\E|_Y$ is an  Ulrich bundle on $(Y,H_Y)$;
\item $\E|_Y$ is $\mu_{H_Y}$-semistable.
\end{enumerate}
\end{proposition}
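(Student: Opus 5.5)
The plan is induction on the number of hyperplane sections, using Theorem~\ref{thm:ulrich-restrict} at each step and Theorem~\ref{thm:ulrich-ss} at the end. Set $X_0:=X$ and $X_i:=H_1\cap\cdots\cap H_i$ for $1\le i\le n-2$, polarized by $H_{X_i}:=H|_{X_i}$, which is again very ample since it is the restriction of a very ample bundle to a closed subvariety. First I check the geometry. Since $\operatorname{char}\mathbf k=0$, Bertini's theorem says a general member of the base-point-free system $|\OO_{X_i}(1)|$ is smooth. When $\dim X_i\ge 2$ it is also irreducible, because a hyperplane section of a smooth irreducible variety of dimension $\ge 2$ is connected by Fulton--Hansen (or by Kodaira vanishing plus $h^0(\OO)=1$). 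Each $X_{i+1}=X_i\cap H_{i+1}$ is a hyperplane section of $X_i$ with $\dim X_i=n-i\ge 3>2$ for $i\le n-3$. So every $X_i$, and in particular $Y=X_{n-2}$, is a smooth irreducible projective variety of dimension $n-i$. This irreducibility is why the iteration stops at a surface: one more cut would give a curve, where the argument is not needed.

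For (i) I prove by induction on $i$ that $\E|_{X_i}$ is Ulrich on $(X_i,H_{X_i})$. The base case $i=0$ is the hypothesis. For the inductive step, $X_i$ is smooth irreducible projective with very ample $H_{X_i}$ and $\E|_{X_i}$ is Ulrich. Theorem~\ref{thm:ulrich-restrict}, applied on $X_i$, then says that for a general $H_{i+1}\in|\OO_{X_i}(1)|$ the restriction $\E|_{X_{i+1}}=(\E|_{X_i})|_{H_{i+1}}$ is Ulrich with respect to $H_{X_{i+1}}$.

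The main obstacle is the meaning of ``general'' in the statement. The hyperplanes are chosen in $|\OO_X(1)|$, while Theorem~\ref{thm:ulrich-restrict} is applied to members of $|\OO_{X_i}(1)|$. I would handle this in two parts.
\begin{enumerate}[label=(\alph*)]
\item The restriction map $H^0(X,\OO_X(1))\to H^0(X_i,\OO_{X_i}(1))$ sends hyperplanes of $X$ to hyperplane sections of $X_i$. Every member of its image is a hyperplane section of $X_i\subset\mathbb P(H^0(\OO_X(1))^\vee)$, and dense open conditions pull back to dense open conditions on the space of $H_{i+1}$.
\item The Ulrich condition $H^j(\E|_{X_{i+1}}(-t))=0$ for $1\le t\le \dim X_{i+1}$ is open in flat families by semicontinuity. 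It can therefore be imposed by an open dense condition on the tuple $(H_1,\dots,H_{n-2})\in|\OO_X(1)|^{n-2}$, intersected with the Bertini open sets.
\end{enumerate}
Because a finite intersection of dense opens is dense open, this makes ``general'' precise.

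For (ii), $Y$ is a smooth irreducible projective surface and $H_Y$ is very ample. By (i), $\E|_Y$ is Ulrich, so Theorem~\ref{thm:ulrich-ss} applied on $(Y,H_Y)$ gives that $\E|_Y$ is $\mu_{H_Y}$-semistable. Since $\dim Y=2$, here the slope is $c_1\cdot H_Y/\rk$.
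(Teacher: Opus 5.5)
Your proposal is correct and follows essentially the same route as the paper: a Bertini-controlled flag $X=X_0\supset\cdots\supset X_{n-2}=Y$, induction via Theorem~\ref{thm:ulrich-restrict} on each $X_i$, and then Theorem~\ref{thm:ulrich-ss} applied on $(Y,H_Y)$.

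Your discussion of what ``general'' means is more careful than the paper's, but it can be shortened. The sequence $0\to\E(-j-1)\to\E(-j)\to\E|_{X'}(-j)\to 0$ shows that the Ulrich property passes to \emph{every} hyperplane section $X'$. So generality is needed only for smoothness and irreducibility, and Bertini gives these directly for the base-point-free restricted linear system. This makes your pullback and semicontinuity step (b) unnecessary.
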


\begin{proof}
 Define the descending flag
\[
X=X_0\supset X_1\supset\cdots\supset X_{n-2}=Y,\qquad X_k:=X_{k-1}\cap H_k,
\]
where each $H_k\in|\OO_{X_{k-1}}(1)|$ is chosen general so that $X_k$ is smooth irreducible (Bertini). Setting $\E_0:=\E$ and $\E_k:=\E_{k-1}|_{X_k}$, induction on $k$ via Theorem~\ref{thm:ulrich-restrict} shows that each $\E_k$ is Ulrich on $X_k$; in particular $\E_{n-2}=\E|_Y$ is Ulrich on $Y$.

\emph{(ii)} Every Ulrich bundle is $\mu_H$-semistable (Theorem~\ref{thm:ulrich-ss}).
\end{proof}

Set $d_Y:=H_Y^2=H^n=d$ (since $Y=H^{n-2}$) and $\mu_Y:=\mu_{H_Y}(\E|_Y)$.

\begin{definition}[Geometric offset and Butler margin]\label{def:offset}
Define the integer offset
\[
c\ :=\ \max\!\left(0,\ \left\lfloor\frac{K_Y\cdot H_Y-\mu_Y}{d_Y}\right\rfloor+1\right)\in\mathbb Z_{\ge 0}
\]
and the \emph{Butler margin}
\[
B\ :=\ c\,d_Y+\mu_Y-K_Y\cdot H_Y\ \in\ \mathbb Q.
\]
By construction $B>0$ (and, more precisely, $0<B\le d_Y$).
\end{definition}

\begin{remark}
The strict positivity $B>0$ suffices for everything below; the slope $\mu_Y$ is rational in general, so no \emph{a priori} integer lower bound on $B$ can be expected.
\end{remark}

\begin{remark}[Independence of the numerical invariants on the choice of general surface]
Let \(X\) be a smooth projective variety of dimension \(n \geq 3\) polarized by a very ample line bundle \(H = \mathcal{O}_X(1)\). Let \(E\) be a vector bundle of rank \(r\) on \(X\). Let \(Y \subset X\) be any smooth complete intersection surface obtained by cutting \(X\) with \(n-2\) general members of \(|H|\), and write \(H_Y = H|_Y\).

\begin{enumerate}
\item The self-intersection number \(d_Y := H_Y^2 = H^n = d\) is independent of the choice of \(Y\).

\item The intersection number \(K_Y \cdot H_Y\) is independent of the choice of general \(Y\).

\item The slope \(\mu_Y := \frac{c_1(E|_Y) \cdot H_Y}{r}\) is independent of the choice of general \(Y\).
\end{enumerate}

\begin{proof}
(1) By construction \(Y\) is cut out by divisors of class \(H\), so its degree with respect to \(H_Y\) is always \(H^n\), regardless of which general hyperplanes are chosen.

(2) Since \(Y\) is a smooth complete intersection of \(n-2\) divisors each of class \(H\), the adjunction formula gives
\[
K_Y = \bigl(K_X + (n-2)H\bigr)\big|_Y.
\]
Intersecting with \(H_Y\) on \(Y\) therefore yields the intersection number on \(X\)
\[
K_Y \cdot H_Y = \bigl(K_X + (n-2)H\bigr) \cdot H^{n-1}.
\]
The right-hand side is a purely numerical intersection product on \(X\) and does not depend on the particular choice of the hyperplanes defining \(Y\).

(3) Let \(c_1(E)\) denote the first Chern class of \(E\) in the Chow ring (or numerical equivalence class) of \(X\). Since \(E\) is locally free, the restriction satisfies \(c_1(E|_Y) = c_1(E)|_Y\). We must show that the intersection number on \(Y\)
\[
c_1(E|_Y) \cdot H_Y = c_1(E) \cdot H^{n-1}.
\]
The class of \(Y\) in the Chow ring of \(X\) is \([Y] = H^{n-2}\). Intersecting the restricted class \(c_1(E)|_Y\) with the class of a general hyperplane section \(H_Y\) on \(Y\) corresponds, via the projection formula in intersection theory, to intersecting the class \(c_1(E)\) on \(X\) with the product of the class of \(Y\) and one further factor of \(H\):
\[
c_1(E)|_Y \cdot H_Y \quad = \quad c_1(E) \cdot \bigl([Y] \cdot H\bigr) = c_1(E) \cdot H^{n-2} \cdot H = c_1(E) \cdot H^{n-1}.
\]
(The equality holds because all intersections are proper when the hyperplanes are chosen generally, by Bertini’s theorem.) Consequently
\[
\mu_Y = \frac{c_1(E) \cdot H^{n-1}}{r},
\]
which is a fixed numerical invariant of the triple \((X, H, E)\) and does not depend on the particular general surface \(Y\).
\end{proof}

As an immediate consequence, the geometric offset
\[
c = \max\Bigl\{0,\ \Bigl\lceil\frac{K_Y \cdot H_Y - \mu_Y}{d_Y}\Bigr\rceil + 1\Bigr\}
\]
and the Butler margin \(B = c\, d_Y + \mu_Y - K_Y \cdot H_Y\) are likewise independent of the choice of general complete intersection surface \(Y\).
\end{remark}

\begin{definition}[Global moving curve]\label{def:moving_curve}
For each integer $m\ge c$, set $k:=m-c\ge0$. Let $\mathcal{H}_{k}$ denote the Hilbert scheme parametrizing smooth, irreducible complete intersection curves in $X$ cut out by the global linear system $\big|\mathcal{O}_{X}(1)^{\oplus(n-2)}\oplus\mathcal{O}_{X}(k)\big|$. We select a general member $C_{k}\in\mathcal{H}_{k}$, explicitly cut out as
\[
C_{k} := H_{1}\cap H_{2}\cap\dots\cap H_{n-2}\cap D_{k},
\]
where $H_{1},\dots,H_{n-2}\in|\mathcal{O}_{X}(1)|$ are general hyperplanes and $D_{k}\in|\mathcal{O}_{X}(k)|$ is a general hypersurface. Setting $Y:=H_{1}\cap\dots\cap H_{n-2}$, the subscheme $C_{k}$ is naturally exhibited as a smooth, general divisor in the linear system $|\mathcal{O}_{Y}(k)|$.
\end{definition}

\begin{lemma}[Curve Genus]\label{lem:curve_genus}
The moving curve $C_{k} \subset Y$ is linearly equivalent to $kH_{Y}$, and its arithmetic genus $g_{k}$ satisfies the exact relation
\begin{equation}\label{eq:genus_k}
2g_{k} = k^{2}d + k(K_{Y}\cdot H_{Y}) + 2,
\end{equation}
where $d := H^{n} = H_{Y}^{2}$ is the polarized degree of the ambient variety $X$. In particular, as $k \to \infty$, one has the asymptotic form $g_{k} = \frac{d}{2}k^{2} + O(k)$.
\end{lemma}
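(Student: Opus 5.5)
The plan is to work entirely on the smooth surface $Y$ and obtain the genus from the adjunction formula. By Definition~\ref{def:moving_curve}, $C_k = Y\cap D_k$ with $D_k\in|\OO_X(k)|$. Hence the ideal sheaf of $C_k$ in $Y$ is $\OO_Y(-k)$, i.e.\ $C_k$ is an effective Cartier divisor on $Y$ with $\OO_Y(C_k)\cong\OO_Y(k)=\OO_X(k)|_Y$. Taking divisor classes gives $C_k\sim kH_Y$, which is the first assertion.

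For the genus, I will apply adjunction on $Y$. Since $C_k$ is a Cartier divisor on the smooth projective surface $Y$, its arithmetic genus satisfies
\[
2p_a(C_k)-2 = C_k\cdot(C_k+K_Y).
\]
This identity holds for any effective divisor on a smooth surface (via $\omega_{C_k}=(\omega_Y\otimes\OO_Y(C_k))|_{C_k}$ and Riemann--Roch), so it does not even need the smoothness of $C_k$. Substituting $C_k\sim kH_Y$ gives
\[
2g_k-2 = k^2H_Y^2+k\,(K_Y\cdot H_Y).
\]
Using $H_Y^2=d$ (part (1) of the remark on independence of numerical invariants), this rearranges to \eqref{eq:genus_k}.

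The asymptotic statement then follows by dividing by $2$. The quantity $K_Y\cdot H_Y=(K_X+(n-2)H)\cdot H^{n-1}$ is a constant independent of $k$ (part (2) of the same remark), so $g_k=\tfrac d2k^2+O(k)$.

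There is essentially no obstacle here. The only point needing care is the degenerate case $k=0$, where $D_0$ is not a hypersurface; in that case the formula is only meant formally, and I would remark that the lemma is applied only for $k\ge1$. Also, smoothness and irreducibility of $C_k$ for $k\ge1$ follow from Bertini, so $g_k$ is the geometric genus as well.
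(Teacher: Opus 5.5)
Your proof is correct and is essentially the paper's argument: you identify $C_k\sim kH_Y$ as a divisor on $Y$, apply adjunction $2g_k-2=C_k\cdot(C_k+K_Y)$, and substitute $H_Y^2=d$. Your extra remarks — that adjunction for the arithmetic genus holds for any effective divisor via Riemann--Roch on $Y$, and the comment on the degenerate case $k=0$ — are harmless refinements that the paper does not spell out.
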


\begin{proof}
By construction, $C_{k}$ is exhibited as a smooth general divisor in the basepoint-free linear system $|\mathcal{O}_{Y}(k)|$ on the surface $Y$. The adjunction formula on $Y$ gives
\[
2g_{k} - 2 = C_{k} \cdot (C_{k} + K_{Y}) = (kH_{Y}) \cdot (kH_{Y} + K_{Y}) = k^{2}d + k(K_{Y}\cdot H_{Y}).
\]
Adding $2$ to both sides establishes identity (\ref{eq:genus_k}). Dividing by $2$ yields the leading quadratic behavior $\frac{d}{2}k^{2} + O(k)$.
\end{proof}

\section{THE FUNDAMENTAL EXACT SEQUENCE}\label{sec:fundamental}

With the semistability of the restricted bundles established on the auxiliary complete intersection surface $Y$, we now analyze the syzygy bundles along the 1-dimensional curve slices. Our first objective is to track the exact behavior of global sections under restriction.

\begin{theorem}[Surjectivity and kernel invariance]\label{thm:kernel_invariance}
Let $X$ be polarized by $H=\mathcal{O}_X(1)$, let $\mathcal{E}$ be an Ulrich bundle on $X$, and fix the geometric offset $c \ge 0$. For any twist $m \ge c$, let $C_k \in \mathcal{H}_k$ be the global moving curve defined in Definition \ref{def:moving_curve}, exhibited as a smooth divisor in $|\mathcal{O}_Y(k)|$ of degree $k := m - c$ on the intermediate surface $Y := H_1 \cap \dots \cap H_{n-2}$. Then the restriction homomorphism on global sections
\[
\mathrm{res}_k : H^0\big(Y, \, \mathcal{E}(m)|_Y\big) \longrightarrow H^0\big(C_k, \, \mathcal{E}(m)|_{C_k}\big)
\]
is surjective. Moreover, its kernel is canonically isomorphic to the static vector space
\[
K_{m,c} := H^0\big(Y, \, \mathcal{E}(c)|_Y\big),
\]
whose dimension $N_K := h^0(Y, \mathcal{E}(c)|_Y)$ is strictly independent of $m$. Equivalently, one has the numerical identity
\begin{equation}\label{eq:dim_split}
h^0\big(Y, \, \mathcal{E}(m)|_Y\big) = N_C(m) + N_K \quad \text{for all } m \ge c,
\end{equation}
where $N_C(m) := h^0\big(C_k, \, \mathcal{E}(m)|_{C_k}\big)$.
\end{theorem}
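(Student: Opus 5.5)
The plan is to read everything off the twisted ideal-sheaf sequence of the divisor $C_k\subset Y$, and then use the aCM property of $\E|_Y$ to kill the obstruction to surjectivity. By Definition~\ref{def:moving_curve} and Lemma~\ref{lem:curve_genus}, $C_k$ is the zero scheme of a section $s_k\in H^0(Y,\OO_Y(k))$, namely the restriction of the equation of $D_k$. So we have $0\to\OO_Y(-k)\xrightarrow{\cdot s_k}\OO_Y\to\OO_{C_k}\to 0$. Tensoring with the locally free sheaf $\E(m)|_Y$ keeps the sequence exact. Since $m-k=c$, this gives
\[
0\longrightarrow \E(c)|_Y\xrightarrow{\ \cdot s_k\ }\E(m)|_Y\longrightarrow \E(m)|_{C_k}\longrightarrow 0 .
\]

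Next I would take cohomology:
\[
0\to H^0(Y,\E(c)|_Y)\xrightarrow{\cdot s_k} H^0(Y,\E(m)|_Y)\xrightarrow{\mathrm{res}_k} H^0(C_k,\E(m)|_{C_k})\to H^1(Y,\E(c)|_Y).
\]
This already identifies $\ker(\mathrm{res}_k)$ with $K_{m,c}=H^0(Y,\E(c)|_Y)$. The identification is canonical once $s_k$, equivalently the curve $C_k$, is fixed: it is just multiplication by the defining equation. The dimension $N_K=h^0(Y,\E(c)|_Y)$ visibly involves only $c$ and the fixed surface $Y$, not $m$.

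The key step is surjectivity, which requires $H^1(Y,\E(c)|_Y)=0$. By Proposition~\ref{prop:iterated}(i), $\E|_Y$ is Ulrich on $(Y,H_Y)$. By the remark in Definition~\ref{def:ulrich}, Ulrich bundles are aCM, so on the surface $Y$ we get $H^1(Y,\E|_Y(t))=0$ for all $t\in\mathbb Z$. Taking $t=c$ gives the required vanishing, and exactness then gives the identity~\eqref{eq:dim_split}. One should be careful that the aCM statement is applied on $Y$ with respect to $H_Y$, where $\dim Y=2$, so the only intermediate index is $i=1$; this is exactly what is needed.

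The only potential subtlety is the degenerate case $k=0$. There $D_0$ is the zero locus of a nonzero constant, $C_0=\emptyset$, the target of $\mathrm{res}_0$ is $0$, and $s_0$ is a unit. The sequence still holds trivially, with $N_C(c)=0$. So the statement is consistent for all $m\ge c$, and the main content is really the aCM vanishing, which is already available.
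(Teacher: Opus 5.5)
Your proposal is correct and is essentially the paper's own argument. You twist the ideal-sheaf sequence $0\to\OO_Y(-k)\to\OO_Y\to\OO_{C_k}\to 0$ by $\E(m)|_Y$, use $m-k=c$, and kill the obstruction $H^1(Y,\E(c)|_Y)$ via the aCM property of the Ulrich bundle $\E|_Y$ from Proposition~\ref{prop:iterated}. Your extra remarks — that the identification is canonical only once the defining section $s_k$ is fixed, and the degenerate case $k=0$ — are harmless refinements that the paper does not spell out.
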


\begin{proof}
Because the curve $C_k$ is the zero locus of a section of $\mathcal{O}_Y(k)$ on the surface $Y$, its ideal sheaf is given by $\mathcal{I}_{C_k/Y} \cong \mathcal{O}_Y(-k)$. Tensoring the standard short exact sequence $0 \to \mathcal{O}_Y(-k) \to \mathcal{O}_Y \to \mathcal{O}_{C_k} \to 0$ with the locally free sheaf $\mathcal{E}(m)|_Y$ yields
\begin{equation}\label{eq:ideal_surface}
0 \longrightarrow \mathcal{E}(m-k)|_Y \longrightarrow \mathcal{E}(m)|_Y \longrightarrow \mathcal{E}(m)|_{C_k} \longrightarrow 0.
\end{equation}
Substituting the defining relation $k = m - c$ into the twist gives $m - k = c$; thus, the leftmost term simplifies strictly to $\mathcal{E}(c)|_Y$, which is manifestly independent of $m$. The associated long exact sequence in surface cohomology begins with
\[
0 \longrightarrow H^0\big(Y, \mathcal{E}(c)|_Y\big) \longrightarrow H^0\big(Y, \mathcal{E}(m)|_Y\big) \xrightarrow{\mathrm{res}_k} H^0\big(C_k, \mathcal{E}(m)|_{C_k}\big) \longrightarrow H^1\big(Y, \mathcal{E}(c)|_Y\big) \longrightarrow \cdots
\]
By Proposition 3.1, $\mathcal{E}|_Y$ is an Ulrich bundle on the surface $Y$, hence arithmetically Cohen-Macaulay (aCM). This enforces the intermediate vanishing $H^1(Y, \mathcal{E}(c)|_Y) = 0$. Consequently, the map $\mathrm{res}_k$ is surjective, and $\ker(\mathrm{res}_k) \cong H^0(Y, \mathcal{E}(c)|_Y) = K_{m,c}$, establishing identity (\ref{eq:dim_split}).
\end{proof}

We now leverage this intermediate surface surjectivity to link the syzygy bundle of the surface to the native syzygy bundle of the curve.

\begin{proposition}[Fundamental Surface-to-Curve Sequence]\label{prop:surface_curve_syz}
Let $S_Y$ denote the syzygy bundle of $\mathcal{E}(m)|_Y$ on $Y$, set $V := \mathcal{E}(m)|_{C_k}$, and let $M_V$ denote the native syzygy bundle of $V$ on $C_k$. Then there exists a canonical short exact sequence of locally free sheaves on $C_k$:
\begin{equation}\label{eq:surface_curve_seq}
0 \longrightarrow K_{m,c} \otimes_k \mathcal{O}_{C_k} \longrightarrow S_Y|_{C_k} \longrightarrow M_V \longrightarrow 0.
\end{equation}
\end{proposition}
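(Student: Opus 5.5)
The plan is to restrict the evaluation sequence on $Y$ to $C_k$ and compare it with the evaluation sequence on $C_k$ by a snake-lemma argument. The surjectivity and kernel computation of Theorem~\ref{thm:kernel_invariance} are the only inputs needed.

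\textbf{Step 1: restrict the surface sequence.} Write $W_Y:=H^0(Y,\E(m)|_Y)$ and $W_C:=H^0(C_k,V)$. By Theorem~\ref{thm:kernel_invariance}, $\E(m)|_Y$ is globally generated for $m\ge c$: an Ulrich bundle $\E|_Y$ is $0$-regular, hence globally generated, and so are its nonnegative twists. Thus the evaluation sequence $0\to S_Y\to W_Y\otimes\OO_Y\to \E(m)|_Y\to 0$ is an exact sequence of vector bundles. Restricting it to $C_k$ remains exact, since $\mathrm{Tor}_1^{\OO_Y}(\E(m)|_Y,\OO_{C_k})=0$ by local freeness. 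This gives
\[
0\to S_Y|_{C_k}\to W_Y\otimes\OO_{C_k}\to V\to 0 .
\]

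\textbf{Step 2: compare with the curve sequence.} On $C_k$ we also have $W_C\otimes\OO_{C_k}\to V$. By Theorem~\ref{thm:kernel_invariance}, $W_C$ is the image of $\mathrm{res}_k$, so $V$ is globally generated and $M_V$ is the kernel of a surjection. The map $\mathrm{res}_k\otimes\mathrm{id}: W_Y\otimes\OO_{C_k}\to W_C\otimes\OO_{C_k}$ commutes with the two evaluation maps to $V$, because evaluating a section of $\E(m)|_Y$ at a point of $C_k$ equals evaluating its restriction there. This yields a morphism of short exact sequences whose right vertical map is $\mathrm{id}_V$ and whose middle vertical map is $\mathrm{res}_k\otimes\mathrm{id}$, together with an induced map $S_Y|_{C_k}\to M_V$.

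\textbf{Step 3: snake lemma.} The middle map is surjective with kernel $\ker(\mathrm{res}_k)\otimes\OO_{C_k}=K_{m,c}\otimes\OO_{C_k}$, again by Theorem~\ref{thm:kernel_invariance}; tensoring over the field preserves exactness. The right map is an isomorphism. The snake lemma therefore shows that $S_Y|_{C_k}\to M_V$ is surjective with kernel isomorphic to $K_{m,c}\otimes\OO_{C_k}$, which is \eqref{eq:surface_curve_seq}. All three terms are locally free, being kernels of surjections of bundles on a smooth curve or trivial bundles. Canonicity holds because every map is induced by restriction and evaluation.

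The only real point to check is that the identification of the middle kernel uses the honest vector-space kernel from Theorem~\ref{thm:kernel_invariance} (the $H^1(Y,\E(c)|_Y)=0$ input). The rest is diagram chasing.
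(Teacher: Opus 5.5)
Your proof is correct and follows the paper's argument essentially verbatim: you restrict the evaluation sequence on $Y$ to $C_k$ using Tor-vanishing, map it to the evaluation sequence on $C_k$ via $\mathrm{res}_k\otimes\mathrm{id}$ and $\mathrm{id}_V$, and then apply the snake lemma with Theorem~\ref{thm:kernel_invariance} to get surjectivity onto $M_V$ and the kernel $K_{m,c}\otimes\mathcal{O}_{C_k}$. The one addition is your explicit check that $\mathcal{E}(m)|_Y$ is globally generated, which the paper leaves implicit; that fact comes from the $0$-regularity of the Ulrich bundle $\mathcal{E}|_Y$, not from Theorem~\ref{thm:kernel_invariance} as the opening of your sentence suggests.
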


\begin{proof}
On the surface $Y$, we have the defining evaluation sequence
\begin{equation}\label{eq:syz_surface}
0 \longrightarrow S_Y \longrightarrow H^0\big(Y, \mathcal{E}(m)|_Y\big) \otimes_k \mathcal{O}_Y \longrightarrow \mathcal{E}(m)|_Y \longrightarrow 0.
\end{equation}
Because $\mathcal{E}(m)|_Y$ is locally free, $\mathrm{Tor}_1^{\mathcal{O}_Y}(\mathcal{E}(m)|_Y, \, \mathcal{O}_{C_k}) = 0$, ensuring that restriction to the divisor $C_k \subset Y$ strictly preserves exactness. This produces the top row of the following commutative diagram with exact rows:
\[
\begin{array}{ccccccccc}
0 & \longrightarrow & S_Y|_{C_k} & \longrightarrow & H^0(Y, \mathcal{E}(m)|_Y) \otimes_k \mathcal{O}_{C_k} & \xrightarrow{\mathrm{ev}_Y|_{C_k}} & V & \longrightarrow & 0 \\
  &                 & \Big\downarrow &                 & \Big\downarrow \vcenter{\rlap{\scriptsize $\mathrm{res}_k \otimes \mathrm{id}$}} & & \Big\downarrow \vcenter{\rlap{\scriptsize $\mathrm{id}_V$}} & & \\
0 & \longrightarrow & M_V & \longrightarrow & H^0(C_k, V) \otimes_k \mathcal{O}_{C_k} & \xrightarrow{\mathrm{ev}_{C_k}} & V & \longrightarrow & 0
\end{array}
\]
The right-hand square commutes inherently: taking a global section on $Y$, restricting it to $C_k$, and evaluating it at a point $p \in C_k$ is identical to restricting the section and evaluating it directly at $p$. By the universal property of kernels, the middle vertical map induces a unique sheaf homomorphism closing the left-hand square.

Applying the Snake Lemma to the diagram, the kernel and cokernel of the rightmost vertical identity map are strictly zero. For the middle vertical map $\mathrm{res}_k \otimes \mathrm{id}$, Theorem \ref{thm:kernel_invariance} dictates that its cokernel is zero and its kernel is precisely $K_{m,c} \otimes_k \mathcal{O}_{C_k}$. The resulting six-term Snake sequence forces the surjectivity of the induced map $S_Y|_{C_k} \to M_V$ and identifies its kernel as $K_{m,c} \otimes_k \mathcal{O}_{C_k}$, establishing sequence (\ref{eq:surface_curve_seq}). 

Finally, all three members of (\ref{eq:surface_curve_seq}) are locally free: $K_{m,c} \otimes \mathcal{O}_{C_k}$ is a trivial vector bundle; $S_Y|_{C_k}$ is the pullback of a locally free sheaf from a smooth surface to a smooth curve; and $M_V$ is locally free as the kernel of a surjection of vector bundles over a smooth curve.
\end{proof}

This sequence on the curve can now be pulled all the way back to the global syzygy bundle of the ambient variety $X$.

\begin{theorem}[Fundamental Curve Syzygy Extension]\label{thm:global_curve_syz}
Let $S_{\mathcal{E}(m)}$ denote the global syzygy bundle of $\mathcal{E}(m)$ on $X$. There exists a canonical, split short exact sequence of vector bundles on $C_k$:
\begin{equation}\label{eq:global_curve_seq}
0 \longrightarrow \big(W_Y \oplus K_{m,c}\big) \otimes_k \mathcal{O}_{C_k} \longrightarrow S_{\mathcal{E}(m)}|_{C_k} \longrightarrow M_V \longrightarrow 0,
\end{equation}
yielding the canonical direct sum decomposition
\begin{equation}\label{eq:global_split}
S_{\mathcal{E}(m)}|_{C_k} \cong \Big(\big(W_Y \oplus K_{m,c}\big) \otimes_k \mathcal{O}_{C_k}\Big) \oplus M_V,
\end{equation}
where $W_Y := \ker\big(H^0(X, \mathcal{E}(m)) \longrightarrow H^0(Y, \mathcal{E}(m)|_Y)\big)$.
\end{theorem}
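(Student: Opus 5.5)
The plan is to work directly with the restriction to $C_k$ of the global evaluation sequence, rather than stacking Proposition~\ref{prop:surface_curve_syz} on a second sequence. The splitting should then come for free from the fact that every section in the kernel vanishes identically on $C_k$.

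\textbf{Step 1: restricting the global sequence.} Since $\mathcal{E}(m)$ is locally free, restricting the evaluation sequence of $\mathcal{E}(m)$ on $X$ to $C_k$ stays exact (no $\mathrm{Tor}_1$, exactly as in Proposition~\ref{prop:surface_curve_syz}). This gives
\[
0\to S_{\mathcal{E}(m)}|_{C_k}\to H^0(X,\mathcal{E}(m))\otimes_k\mathcal{O}_{C_k}\xrightarrow{\ \mathrm{ev}|_{C_k}\ } V\to 0 .
\]

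\textbf{Step 2: the restriction $H^0(X,\mathcal{E}(m))\to H^0(C_k,V)$.} I factor it as
\[
H^0(X,\mathcal{E}(m))\to H^0(X_1,\mathcal{E}(m)|_{X_1})\to\cdots\to H^0(Y,\mathcal{E}(m)|_Y)\xrightarrow{\mathrm{res}_k} H^0(C_k,V).
\]
At each hyperplane step I use the sequence $0\to\mathcal{E}_{j-1}(m-1)\to\mathcal{E}_{j-1}(m)\to\mathcal{E}_j(m)\to0$ on $X_{j-1}$. By Proposition~\ref{prop:iterated} each $\mathcal{E}_{j-1}$ is Ulrich on $X_{j-1}$, which has dimension $n-j+1\ge 3$. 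Hence $\mathcal{E}_{j-1}$ is aCM and $H^1(X_{j-1},\mathcal{E}_{j-1}(m-1))=0$, so each step is surjective. The last map $\mathrm{res}_k$ is surjective by Theorem~\ref{thm:kernel_invariance}.

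It follows that the composite is surjective, with kernel $W:=\ker\big(H^0(X,\mathcal{E}(m))\to H^0(C_k,V)\big)$. This kernel sits in an exact sequence $0\to W_Y\to W\to K_{m,c}\to 0$, because $H^0(X,\mathcal{E}(m))\to H^0(Y,\mathcal{E}(m)|_Y)$ is onto with kernel $W_Y$, and the kernel of $\mathrm{res}_k$ is $K_{m,c}$. Choosing a splitting of vector spaces gives $W\cong W_Y\oplus K_{m,c}$. This identification is canonical only up to that choice, and I will note this in the write-up.

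\textbf{Step 3: the splitting.} Choose a complement $U$ with $H^0(X,\mathcal{E}(m))=W\oplus U$; then restriction maps $U$ isomorphically onto $H^0(C_k,V)$. Every $s\in W$ vanishes identically on $C_k$, so $\mathrm{ev}|_{C_k}$ is zero on $W\otimes\mathcal{O}_{C_k}$. On $U\otimes\mathcal{O}_{C_k}$, after identifying $U\cong H^0(C_k,V)$, the map $\mathrm{ev}|_{C_k}$ is exactly $\mathrm{ev}_{C_k}$. Therefore
\[
S_{\mathcal{E}(m)}|_{C_k}=\ker\big((W\otimes\mathcal{O}_{C_k})\oplus(U\otimes\mathcal{O}_{C_k})\to V\big)=(W\otimes\mathcal{O}_{C_k})\oplus M_V .
\]
This gives both the exact sequence \eqref{eq:global_curve_seq}, with the inclusion of $W\otimes\mathcal{O}_{C_k}$ and the projection to $M_V$, and the decomposition \eqref{eq:global_split}.

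For compatibility, I will check with a snake-lemma diagram, as in Proposition~\ref{prop:surface_curve_syz}, that the subbundle $W\otimes\mathcal{O}_{C_k}$ does not depend on the choice of $U$. This holds because it is the kernel of the induced map $S_{\mathcal{E}(m)}|_{C_k}\to M_V$. The complement does depend on $U$.

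\textbf{Main obstacle.} The only delicate point is the splitting. An abstract extension of $M_V$ by a trivial bundle need not split, since $H^1(M_V^*)$ can be nonzero. So the argument must use the concrete observation that $W$ consists of sections vanishing along $C_k$. The surjectivity chain in Step~2 is routine once the aCM vanishing is invoked at each level of the flag.
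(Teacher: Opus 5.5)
Your proof is correct and follows the paper's argument essentially step for step: you restrict the global evaluation sequence, show that $H^0(X,\mathcal{E}(m))\to H^0(C_k,V)$ is onto by factoring through $Y$, and split the kernel block-diagonally because the sections in $W$ vanish on $C_k$. The only differences are minor. You get surjectivity onto $H^0(Y,\mathcal{E}(m)|_Y)$ one hyperplane at a time, using that each $\mathcal{E}_{j-1}$ is aCM, whereas the paper uses the Koszul resolution of $\mathcal{I}_{Y/X}$ on $X$. You also correctly observe that the identification $W\cong W_Y\oplus K_{m,c}$ and the complement to $W\otimes\mathcal{O}_{C_k}$ depend on a choice, while the paper calls them canonical.
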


\begin{proof}
Set $\mathcal{S} := S_{\mathcal{E}(m)}|_{C_k}$. By definition, $\mathcal{S}$ is the kernel of the restricted global evaluation morphism $\mathrm{ev}_X|_{C_k} : H^0(X, \mathcal{E}(m)) \otimes \mathcal{O}_{C_k} \longrightarrow V$. We factor the total restriction map $\rho : H^0(X, \mathcal{E}(m)) \to H^0(C_k, V)$ through the intermediate surface $Y$:
\[
H^0\big(X, \, \mathcal{E}(m)\big) \xrightarrow{\;r_1\;} H^0\big(Y, \, \mathcal{E}(m)|_Y\big) \xrightarrow{\;r_2\;} H^0\big(C_k, \, V\big).
\]
By Theorem \ref{thm:kernel_invariance}, the surface-to-curve restriction $r_2 = \mathrm{res}_k$ is surjective with kernel $K_{m,c}$. To evaluate the ambient-to-surface restriction $r_1$, consider the ideal sheaf exact sequence $0 \to \mathcal{E}(m) \otimes \mathcal{I}_{Y/X} \to \mathcal{E}(m) \to \mathcal{E}(m)|_Y \to 0$. Because $Y$ is cut out by a regular sequence of $n-2$ linear hyperplanes $H_1, \dots, H_{n-2} \in |\mathcal{O}_X(1)|$, the Koszul resolution of $\mathcal{I}_{Y/X}$ is filtered by direct sums of line bundles $\mathcal{O}_X(-p)$ for $1 \le p \le n-2$. Tensoring with $\mathcal{E}(m)$ reduces the obstruction space $H^1(X, \mathcal{E}(m) \otimes \mathcal{I}_{Y/X})$ to intermediate cohomology groups of the form $H^i(X, \mathcal{E}(m-p))$ for $1 \le i \le n-2$. Because $\mathcal{E}$ is an Ulrich bundle on $X$, it is arithmetically Cohen-Macaulay, forcing $H^i(X, \mathcal{E}(t)) = 0$ for all $0 < i < n$ and $t \in \mathbb{Z}$. Hence $H^1(X, \mathcal{E}(m) \otimes \mathcal{I}_{Y/X}) = 0$, proving that $r_1$ is strictly surjective.

Consequently, the composite restriction $\rho = r_2 \circ r_1$ is surjective. Setting $U := \ker(\rho)$, the exactness of the short sequence of vector spaces over $k$:
\[
0 \longrightarrow \ker(r_1) \longrightarrow \ker(\rho) \longrightarrow \ker(r_2) \longrightarrow 0 \iff 0 \longrightarrow W_Y \longrightarrow U \longrightarrow K_{m,c} \longrightarrow 0
\]
guarantees a canonical vector space splitting $U \cong W_Y \oplus K_{m,c}$. 

Choosing a splitting $H^0(X, \mathcal{E}(m)) \cong U \oplus H^0(C_k, V)$ yields a block decomposition of the evaluation domain:
\[
H^0\big(X, \mathcal{E}(m)\big) \otimes_k \mathcal{O}_{C_k} \cong \big(U \otimes_k \mathcal{O}_{C_k}\big) \oplus \big(H^0(C_k, V) \otimes_k \mathcal{O}_{C_k}\big).
\]
With respect to this direct sum, $\mathrm{ev}_X|_{C_k}$ acts as zero on the first summand and as the native curve evaluation $\mathrm{ev}_{C_k}$ on the second. The kernel of evaluation therefore splits block-diagonally as $\mathcal{S} \cong (U \otimes_k \mathcal{O}_{C_k}) \oplus M_V$. Substituting $U \cong W_Y \oplus K_{m,c}$ establishes (\ref{eq:global_curve_seq}) and (\ref{eq:global_split}).
\end{proof}

\section{ACTIVATION OF BUTLER'S THEOREM}\label{sec:butler}

Let $V := \mathcal{E}(m)|_{C_{k}}$ denote the restriction of the twisted Ulrich bundle to our chosen global moving curve. We now establish that for sufficiently large twists, this native 1-dimensional slice strictly satisfies the stability criteria of Butler's framework.

\begin{lemma}[Exact slope scaling under restriction]\label{lem:slope_scaling}
For any torsion-free coherent sheaf $\mathcal{F}$ on $X$ whose singular locus has codimension at least two (in particular, for any reflexive sheaf or any coherent subsheaf of a locally free sheaf), one has
\[
\mu_{C_{k}}(\mathcal{F}|_{C_{k}}) = k \cdot \mu_{H}(\mathcal{F}).
\]
\end{lemma}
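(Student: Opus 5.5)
The plan is to reduce the statement to a degree computation for a line bundle. On the smooth variety $X$ the torsion-free sheaf $\F$ has a determinant line bundle $\det\F$ with $c_1(\F)=c_1(\det\F)$. Let $Z:=\mathrm{Sing}(\F)$ be the closed locus where $\F$ fails to be locally free; by hypothesis $\operatorname{codim}_X Z\ge 2$. For a subsheaf of a locally free sheaf, torsion-freeness on a smooth variety already gives this codimension bound, and for reflexive sheaves it is standard.

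\textbf{Step 1: the curve misses $Z$.} The curve $C_k=H_1\cap\cdots\cap H_{n-2}\cap D_k$ is cut out by general members of the base-point-free systems $|\OO_X(1)|$ and $|\OO_X(k)|$. I will invoke Bertini-type genericity: each general member meets every irreducible component of $Z$ properly. After $n-1$ such cuts, a locus of dimension $\le n-2$ is cut down to the empty set, so $C_k\cap Z=\emptyset$.

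Consequently $C_k$ lies in the open set $X\setminus Z$ on which $\F$ is locally free of rank $\rk\F$. There $\det\F$ is the top exterior power of $\F$, and that construction commutes with restriction. Hence $\F|_{C_k}$ is a vector bundle of rank $\rk\F$ with $\det(\F|_{C_k})\cong(\det\F)|_{C_k}$.

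\textbf{Step 2: compute the degree.} The degree of a restricted line bundle on a curve equals the intersection number with the curve class, so
\[
\deg(\F|_{C_k})=c_1(\det\F)\cdot[C_k]=c_1(\F)\cdot H^{n-2}\cdot kH=k\,\bigl(c_1(\F)\cdot H^{n-1}\bigr).
\]
Here I use that $[C_k]=H^{n-2}\cdot(kH)$ in the Chow ring, because $C_k$ is a proper complete intersection; this is the projection-formula argument already used in the remark on independence of the numerical invariants.

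Dividing by $\rk(\F|_{C_k})=\rk\F$ then gives $\mu_{C_k}(\F|_{C_k})=k\,\mu_H(\F)$.

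\textbf{Expected main obstacle.} The delicate point is Step 1, namely the order of quantifiers in ``general''. The curve $C_k$ must avoid $Z$, but $Z$ depends on $\F$. For a single given $\F$, or for countably many sheaves, a general choice works, since we only discard a proper closed subset of the parameter space $\mathcal H_k$ for each sheaf.

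For the later application, the destabilizing subsheaf should be taken to be the maximal destabilizing subsheaf of $S_{\E(m)}$ for each fixed $m$. Each such sheaf is a single, canonically defined object, so choosing $C_k$ general relative to it is legitimate. I would state this explicitly in the proof, so that the lemma is applied only to sheaves fixed before the curve is chosen.
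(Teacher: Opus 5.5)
Your proof is correct and follows essentially the same route as the paper. Choose $C_k$ general so that it misses the non-locally-free locus of $\mathcal{F}$, then compute $\deg(\mathcal{F}|_{C_k})$ as $c_1(\mathcal{F})$ paired with the class $kH^{n-1}$ of $C_k$; the paper passes through $c_1(\mathcal{F}|_Y)=c_1(\mathcal{F})|_Y$ and $C_k\equiv kH_Y$ on $Y$, while you use $[C_k]=H^{n-2}\cdot kH$ directly on $X$. Your genericity step is also the more accurate one: only the curve $C_k$ can be guaranteed to avoid a singular locus of codimension exactly two, not the surface $Y$ as the paper asserts.
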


\begin{proof}
By Bertini's Theorem and the generic choices of $H_{1}, \dots, H_{n-2}$ and $D_{k}$, the intermediate surface $Y$ and the curve $C_{k}$ strictly avoid the singular locus of $\mathcal{F}$. Consequently, the restricted sheaf $\mathcal{F}|_{C_{k}}$ is torsion-free and maintains the exact same generic rank as $\mathcal{F}$. By the naturality of cycle classes under restriction, $c_{1}(\mathcal{F}|_{Y}) = c_{1}(\mathcal{F})|_{Y}$ in $\mathrm{Pic}(Y)$. Intersecting with the surface polarization $H_{Y}$ yields $c_{1}(\mathcal{F}|_{Y}) \cdot H_{Y} = c_{1}(\mathcal{F}) \cdot H^{n-1}$. Since $C_{k} \equiv kH_{Y}$ on $Y$ and the intersection pairing is bilinear, the degree on the curve expands as
\[
\deg_{C_{k}}(\mathcal{F}|_{C_{k}}) = c_{1}(\mathcal{F}|_{Y}) \cdot C_{k} = c_{1}(\mathcal{F}|_{Y}) \cdot (kH_{Y}) = k \big(c_{1}(\mathcal{F}) \cdot H^{n-1}\big).
\]
Dividing both sides by $\mathrm{rk}(\mathcal{F})$ establishes the lemma.
\end{proof}

\begin{proposition}[Verification of Butler's conditions]\label{prop:butler_verification}
There exists an explicitly computable integer threshold $M_{\mathrm{Butler}} \ge c$ such that for every twist $m \ge M_{\mathrm{Butler}}$, the restricted bundle $V = \mathcal{E}(m)|_{C_{k}}$ satisfies conditions (B1)–(B3) of Theorem 2.6.
\end{proposition}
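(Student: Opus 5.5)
The plan is to check the three conditions of Theorem~\ref{thm:butler} one at a time for $V=\E(m)|_{C_k}$ with $k=m-c$. Each check produces an explicit lower bound on $k$. Then $M_{\mathrm{Butler}}$ is defined as $c$ plus the maximum of these bounds, together with a bound guaranteeing $g_k\ge 1$. By Lemma~\ref{lem:curve_genus}, $2g_k=k^2d+k(K_Y\cdot H_Y)+2$, so $g_k\ge1$ as soon as $kd+K_Y\cdot H_Y\ge 0$, which is an explicit condition on $k$.

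\textbf{(B1), global generation.} An Ulrich bundle satisfies $H^i(X,\E(-i))=0$ for $1\le i\le n$, so it is $0$-regular in the sense of Castelnuovo--Mumford. Hence $\E(m)$ is globally generated for every $m\ge0$. Restricting surjections $H^0(X,\E(m))\otimes\OO_X\to\E(m)$ to $C_k$ gives a surjection onto $V$. It factors through $H^0(C_k,V)\otimes\OO_{C_k}$, which gives (B1) with no condition on $k$.

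\textbf{(B3), the slope inequality.} This is pure bookkeeping. The key facts are $\deg(H|_{C_k})=kd$ and, by Lemma~\ref{lem:slope_scaling} applied on $Y$, $\mu_{C_k}(\E|_{C_k})=k\mu_Y$. Together these give
\[
\mu_{C_k}(V)=k\mu_Y+mkd=k\mu_Y+k(k+c)d .
\]
Subtracting $2g_k$ leaves
\[
\mu_{C_k}(V)-2g_k=k\bigl(cd+\mu_Y-K_Y\cdot H_Y\bigr)-2=kB-2 .
\]
Since $B>0$ by Definition~\ref{def:offset}, (B3) holds for all $k\ge \lceil 2/B\rceil$. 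This is exactly the purpose of the offset $c$: it turns the linear-in-$k$ defect into the positive margin $B$.

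\textbf{(B2), semistability of the restriction to the curve.} This is the main obstacle. By Proposition~\ref{prop:iterated}, $\E|_Y$ is $\mu_{H_Y}$-semistable, and twisting does not affect semistability. So the task reduces to showing that the restriction of a semistable bundle on the smooth surface $Y$ to a general curve in $|\OO_Y(k)|$ stays semistable. Mehta--Ramanathan gives this for $k\gg0$, but its bound is not effective. Since the statement requires an explicitly computable threshold, I would instead invoke Flenner's restriction theorem, which applies in characteristic zero to a normal projective variety embedded by $H_Y$. Flenner's theorem gives semistability of the restriction to a general member of $|kH_Y|$ as soon as
\[
\frac{\binom{k+2}{2}-kd-1}{k}>d\cdot\max\Bigl(\frac{r^2-1}{4},1\Bigr).
\]
The left side grows like $k/2$, so this holds for all $k$ beyond an explicit bound $k_{\mathrm F}(d,r)$. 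The remaining care is to make sure that ``general'' in Flenner's theorem is compatible with the generic choice of $D_k$ in Definition~\ref{def:moving_curve}. This only requires intersecting finitely many dense open conditions in $|\OO_Y(k)|$, and the restriction $H^0(X,\OO_X(k))\to H^0(Y,\OO_Y(k))$ is surjective since $Y$ is a complete intersection. I would state it that way and set
\[
M_{\mathrm{Butler}}=c+\max\bigl(\lceil 2/B\rceil,\ k_{\mathrm F}(d,r),\ k_g\bigr),
\]
where $k_g$ is the bound guaranteeing $g_k\ge1$.
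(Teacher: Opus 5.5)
Your proposal is correct and follows the paper's proof: (B1) from $0$-regularity, (B3) from the identity $\mu_{C_k}(V)-2g_k=kB-2$, and (B2) from a restriction theorem on $Y$. Your two departures are improvements: Flenner's theorem genuinely gives a threshold depending only on $r$ and $d$, whereas the non-effective Mehta--Ramanathan theorem cited in the paper's proof does not (the paper's closing remark itself switches to Flenner), and your bound $k_g$ checks the hypothesis $g_k\ge 1$ of Butler's theorem, which the paper never verifies.

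Two small fixes are needed, neither fatal:
\begin{enumerate}
\item In Flenner's inequality the subtracted term is (number of hypersurfaces)$\times$(their degree)$\,=k$, so the left side is $\bigl(\binom{k+2}{2}-k-1\bigr)/k=(k+1)/2$. Your version with $kd$ is stronger, so it is still sufficient.
\item Surjectivity of $H^0(X,\mathcal{O}_X(k))\to H^0(Y,\mathcal{O}_Y(k))$ does not follow merely from $Y$ being a complete intersection, since $\mathcal{O}_X$ can have nonzero intermediate cohomology. It requires $H^i(X,\mathcal{O}_X(k-i))=0$ for $1\le i\le n-2$, which holds for $k\ge\operatorname{reg}(\mathcal{O}_X)$, so that bound should also go into $M_{\mathrm{Butler}}$.
\end{enumerate}
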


\begin{proof}
\textbf{(B1) Global generation.} Because $\mathcal{E}$ is an Ulrich bundle on $X$, it is Castelnuovo-Mumford 0-regular (i.e., all higher cohomology groups of non-negative twists vanish identically). Therefore, the twisted bundle $\mathcal{E}(m)$ is globally generated on $X$ for every integer $m \ge 0$. Since the restriction of a globally generated sheaf to any closed subscheme remains globally generated, $V$ is globally generated on $C_{k}$.

\textbf{(B2) Slope semistability.} By Proposition \ref{prop:iterated}, the restricted bundle $\mathcal{E}|_{Y}$ is $\mu_{H_{Y}}$-semistable on the smooth surface $Y$. By the classical Mehta-Ramanathan restriction theorem \cite[Theorem 7.2.1]{HuyLehn} for slope-semistable sheaves on projective surfaces, there exists an explicit integer threshold $k_{0}$ depending strictly on the discriminant of $\mathcal{E}|_{Y}$ and the surface polarization $H_{Y}$, such that for every degree $k \ge k_{0}$ and every general divisor $C_{k} \in |\mathcal{O}_{Y}(k)|$, the restriction $(\mathcal{E}|_{Y})|_{C_{k}} = \mathcal{E}|_{C_{k}}$ is $\mu_{C_{k}}$-semistable. Tensoring a vector bundle by a line bundle strictly preserves slope-semistability; since $V \cong \mathcal{E}|_{C_{k}} \otimes \mathcal{O}_{C_{k}}(m)$, the bundle $V$ is $\mu_{C_{k}}$-semistable for all twists $m \ge m_{\mathrm{MR}} := k_{0} + c$.

\textbf{(B3) The slope-to-genus threshold $\mu_{C_{k}}(V) \ge 2g_{k}$.} By Lemma \ref{lem:slope_scaling}, the slope of $V$ on the curve evaluates to
\[
\mu_{C_{k}}(V) = k \cdot \mu_{H_{Y}}\big(\mathcal{E}(m)|_{Y}\big) = k \big(\mu_{Y} + m \cdot d\big),
\]
where $d := H^{n} = H_{Y}^{2}$ and $\mu_{Y} := \mu_{H_{Y}}(\mathcal{E}|_{Y})$. Substituting the variable decoupling relation $m = k + c$ directly into the linear factor yields
\[
\mu_{C_{k}}(V) = k \big(\mu_{Y} + (k + c)d\big) = k^{2}d + k(c \cdot d + \mu_{Y}).
\]
Recall from Lemma \ref{lem:curve_genus} that the arithmetic genus of $C_{k}$ satisfies $2g_{k} = k^{2}d + k(K_{Y} \cdot H_{Y}) + 2$. Subtracting the genus from the bundle slope yields
\[
\mu_{C_{k}}(V) - 2g_{k} = \Big[ k^{2}d + k(c \cdot d + \mu_{Y}) \Big] - \Big[ k^{2}d + k(K_{Y} \cdot H_Y) + 2 \Big] = k \big(c \cdot d + \mu_{Y} - K_{Y} \cdot H_{Y}\big) - 2.
\]
Recall from Definition 3.2 that the rational Butler margin is defined precisely as $B := c \cdot d + \mu_{Y} - K_{Y} \cdot H_{Y}$. Substituting $B$ into the difference simplifies the exact cohomological gap to
\[
\mu_{C_{k}}(V) - 2g_{k} = k \cdot B - 2.
\]
Since $B > 0$ by construction, the required inequality $\mu_{C_{k}}(V) \ge 2g_{k}$ holds if and only if $k \cdot B \ge 2$, which is equivalent to $k \ge \lceil 2/B \rceil$. Restoring the twist variable via $m = k + c$, condition (B3) holds for all $m \ge m_{0} := c + \lceil 2/B \rceil$. 

Setting $M_{\mathrm{Butler}} := \max(m_{\mathrm{MR}}, \, m_{0})$, Theorem 2.6 applies, guaranteeing that the native curve syzygy bundle $M_{V}$ is $\mu_{C_{k}}$-semistable for all $m \ge M_{\mathrm{Butler}}$.
\end{proof}

\section{Asymptotic expansions}\label{sec:asymptotics}

\begin{lemma}[Asymptotic dimensions and slope]\label{lem:asymptotic_dim_slope}
Let $d := H^n = H_Y^2$, and set $r := \operatorname{rk}(\mathcal{E})$. As $m \to \infty$:
\begin{enumerate}
    \item[(1)] $N_X(m) := h^0(X, \mathcal{E}(m)) = \frac{rd}{n!}m^n + O(m^{n-1})$.
    \item[(2)] $N_C(m) := h^0(C_k, V) = \frac{rd}{2}m^2 + O(m)$.
    \item[(3)] $\mu_H(S_{\mathcal{E}(m)}) = -\frac{n!}{m^{n-1}} + O(m^{-n}) \longrightarrow 0^-$.
    \item[(4)]  $\mu(M_{V}) = -2 + O(m^{-1})$.
\end{enumerate}
In particular, $N_X(m) - r = \Theta(m^n)$, while $(1 + N_K)\bigl(N_C(m) - r\bigr) = \Theta(m^2)$.
\end{lemma}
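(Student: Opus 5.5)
The plan is to pin down exact closed forms for the relevant Hilbert functions using the Ulrich property, and only then read off the asymptotics. Nothing here needs the moving-curve geometry beyond Theorem~\ref{thm:kernel_invariance}.

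\textbf{Step 1: the Hilbert function of $\E$.} For (1), I first show that $h^0(X,\E(m))=\chi(\E(m))$ for all $m\ge 0$.
\begin{itemize}
\item The aCM property kills $H^i$ for $0<i<n$.
\item The top cohomology $H^n(X,\E(m))$ vanishes for $m\ge -n$. This follows by descending along a general hyperplane flag via Theorem~\ref{thm:ulrich-restrict}, or equivalently from the standard fact that $\E(t)$ has no $H^n$ for $t\ge -n$ once $H^n(\E(-n))=0$ and $\E$ is $0$-regular.
\item The Ulrich vanishings $\chi(\E(-j))=0$ for $1\le j\le n$ determine the degree-$n$ Hilbert polynomial up to its leading coefficient, which is $rd/n!$.
\end{itemize}
Together these give the closed form
\[
N_X(m)=rd\binom{m+n}{n},
\]
which yields (1). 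The same argument on the surface $Y$, using Proposition~\ref{prop:iterated}, gives
\[
h^0(Y,\E(m)|_Y)=rd\binom{m+2}{2}.
\]

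\textbf{Step 2: sections on the curve.} For (2), I avoid Riemann--Roch on $C_k$, and hence any separate $h^1$-vanishing on the curve. Instead I use the identity \eqref{eq:dim_split} of Theorem~\ref{thm:kernel_invariance}:
\[
N_C(m)=rd\binom{m+2}{2}-N_K.
\]
Since $N_K$ is independent of $m$, this is $\tfrac{rd}{2}m^2+O(m)$.

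As a consistency check, Riemann--Roch gives $\deg V - r(g_k-1)$ with $\deg V=rk(\mu_Y+md)$ by Lemma~\ref{lem:slope_scaling} and $g_k$ from Lemma~\ref{lem:curve_genus}. With $k=m-c$, both routes have leading term $\tfrac{rd}{2}m^2$.

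\textbf{Step 3: the two slopes.} For (3), the defining sequence of the syzygy bundle gives $c_1(S_{\E(m)})=-c_1(\E(m))$ and $\rk S_{\E(m)}=N_X(m)-r$. Hence
\[
\mu_H(S_{\E(m)})=-\frac{c_1(\E)\cdot H^{n-1}+rmd}{N_X(m)-r}.
\]
The numerator is $rdm+O(1)$ and, by Step 1, the denominator is $\tfrac{rd}{n!}m^n+O(m^{n-1})$. Expanding the quotient gives $-n!/m^{n-1}+O(m^{-n})$, which is negative for large $m$.

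Likewise for (4), I compute
\[
\mu(M_V)=-\frac{\deg V}{N_C(m)-r}.
\]
Here $\deg V=rk(\mu_Y+md)=rdm^2+O(m)$ and the denominator is $\tfrac{rd}{2}m^2+O(m)$, so the ratio is $-2+O(m^{-1})$.

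\textbf{Step 4: the final comparison.} The closing statement follows directly from (1) and (2), since $N_K$ is a constant and $d,r>0$.

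The only step needing real care is the exact Hilbert polynomial in Step 1, specifically the top-cohomology vanishing for non-negative twists. I would handle it by induction on dimension through hyperplane sections, as in Proposition~\ref{prop:iterated}. Concretely, from $0\to\E(t-1)\to\E(t)\to\E(t)|_{H'}\to0$ and the surjection $H^n(\E(t-1))\to H^n(\E(t))$, the vanishing of $H^n(\E(-n))$ propagates upward to all $t\ge -n$.
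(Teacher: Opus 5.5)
Your proposal is correct. For (1), (3) and (4) it follows the paper's line: vanishing of higher cohomology plus Riemann--Roch for $N_X$, then the ratio $-\deg/\rk$ for both slopes. Your only addition there is to justify the top-degree vanishing $H^n(X,\E(m))=0$ for $m\ge -n$, which the paper attributes to ``the Ulrich condition'' without argument (it needs $0$-regularity or your hyperplane argument). That yields the exact formula $N_X(m)=rd\binom{m+n}{n}$.

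The genuine difference is in (2). The paper computes $N_C(m)$ by Riemann--Roch on $C_k$, using $\deg V=rk(\mu_Y+md)$ and the genus formula of Lemma~\ref{lem:curve_genus}. It justifies $H^1(C_k,V)=0$ by ``Serre vanishing'' for $m\gg0$. That is not literally Serre vanishing, since the curve $C_k$ itself changes with $m$. The vanishing can be justified from the surface sequence \eqref{eq:ideal_surface}, because $H^1(Y,\E(m)|_Y)=0=H^2(Y,\E(c)|_Y)$.

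You bypass curve cohomology entirely. Combining \eqref{eq:dim_split} of Theorem~\ref{thm:kernel_invariance} with the exact Ulrich Hilbert function on $Y$ gives $N_C(m)=rd\bigl[\binom{m+2}{2}-\binom{c+2}{2}\bigr]$ for every $m\ge c$.

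Your route gives exact closed forms with explicit lower-order terms. That is what is needed for the threshold $M_{\mathrm{asymp}}$ in the final remark to be genuinely computable. The paper's route reuses the quantities $\deg V$ and $g_k$ from the Butler verification in Section~\ref{sec:butler}, so it fits that computation and needs no closed form for the Hilbert polynomial.

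Your Riemann--Roch ``consistency check'' is in fact an exact identity. Equating the two expressions for $N_C(m)$ forces $\mu_Y=\tfrac12(K_Y\cdot H_Y+3d)$, the standard slope of an Ulrich bundle on a surface.
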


\begin{proof}
(1) By the Ulrich condition, $H^i(X, \mathcal{E}(m)) = 0$ for all $i > 0$ and $m \ge 0$; thus $N_X(m) = \chi(X, \mathcal{E}(m))$. By the Hirzebruch--Riemann--Roch Theorem, $\chi(X, \mathcal{E}(m))$ is a polynomial in $m$ of degree $n$ with leading coefficient $\operatorname{rk}(\mathcal{E})\frac{H^n}{n!} = \frac{rd}{n!}$, yielding the claimed asymptotic form.

\medskip \noindent
(2) For $m \gg 0$, Serre vanishing gives $H^1(C_k, V) = 0$, so $N_C(m) = \chi(C_k, V)$. By Riemann--Roch on the smooth curve $C_k$, we have $\chi(C_k, V) = \deg(V) + r(1 - g_k)$. Using the numerical equivalence $C_k \equiv kH_Y$ on the surface $Y$, the degree expands as
\[
    \deg(V) = c_1(\mathcal{E}(m)|_Y) \cdot C_k = \bigl(c_1(\mathcal{E}|_Y) + rmH_Y\bigr) \cdot kH_Y = rk(\mu_Y + md),
\]
where $\mu_Y := \frac{1}{r}\bigl(c_1(\mathcal{E}|_Y) \cdot H_Y\bigr)$. Because $k = m - c$, the leading term of this degree is strictly $rdm^2 + O(m)$. Combining this with the asymptotic genus expression $r(1 - g_k) = -\frac{rd}{2}m^2 + O(m)$ supplied by Lemma 3.5 yields
\[
    N_C(m) = \left(rdm^2 + O(m)\right) + \left(-\frac{rd}{2}m^2 + O(m)\right) = \frac{rd}{2}m^2 + O(m).
\]

\medskip \noindent
(3) From the evaluation sequence defining the syzygy bundle on $X$, we have $c_1(S_{\mathcal{E}(m)}) = -c_1(\mathcal{E}(m))$ and $\operatorname{rk}(S_{\mathcal{E}(m)}) = N_X(m) - r$. Consequently, its polarized degree is
\[
    \deg_H(S_{\mathcal{E}(m)}) = -c_1(\mathcal{E}(m)) \cdot H^{n-1} = -\bigl(rmd + O(1)\bigr).
\]
Dividing by the rank and expanding the denominator asymptotically via (1) yields
\[
    \mu_H(S_{\mathcal{E}(m)}) = \frac{-\bigl(rmd + O(1)\bigr)}{\frac{rd}{n!}m^n + O(m^{n-1})} = \left(\frac{-rdm}{\frac{rd}{n!}m^n}\right)\left[\frac{1 + O(m^{-1})}{1 + O(m^{-1})}\right] = -\frac{n!}{m^{n-1}} + O(m^{-n}),
\]
which strictly approaches $0^-$ from below as $m \to \infty$.

\medskip \noindent
(4) The slope of the native curve syzygy bundle is given by $\mu(M_{V}) = \frac{-deg(V)}{N_{C}(m) - r}$. The degree expands as $deg(V) = c_{1}(\mathcal{E}(m)|_{C_{k}}) \cdot C_{k} = r(md + \mu_{Y})k$. Substituting the variable decoupling relation $k = m-c$ yields $deg(V) = rdm^{2} + O(m)$. Dividing this by the asymptotic expansion $N_{C}(m) = \frac{rd}{2}m^{2} + O(m)$ derived in (2) asymptotically yields $\mu(M_{V}) = -2 + O(m^{-1})$.

Finally, the order-of-magnitude assertions follow directly from the leading coefficients: the rank $N_X(m) - r$ is governed by $\frac{rd}{n!}m^n = \Theta(m^n)$, while the product $(1 + N_K)\bigl(N_C(m) - r\bigr)$ is the product of a strictly positive constant $\Theta(1)$ and the quadratic polynomial $\frac{rd}{2}m^2 + O(m) = \Theta(m^2)$.
\end{proof}

\section{Proof of the Main Theorem}\label{sec:proof}

\begin{proof}[Proof of Theorem \ref{thm:main}]
Assume for contradiction that there exists an infinite sequence of integers $m\rightarrow\infty$ admitting a saturated destabilizing subsheaf $\mathcal{F}\subset S_{\mathcal{E}(m)}$ of rank $r:=\mathrm{rk}(\mathcal{F}) \ge 1$. The destabilizing hypothesis dictates that
\begin{equation}\label{eq:destab}
\mu_{H}(\mathcal{F}) > \mu_{H}(S_{\mathcal{E}(m)}) \sim -\frac{n!}{m^{n-1}}.
\end{equation}

For a twist $m \ge M_{\mathrm{Butler}}$, set $k := m - c$ and select a general complete intersection curve $C_k \in \mathcal{H}_k$, cut out by general hyperplanes $H_1, \dots, H_{n-2} \in |\mathcal{O}_X(1)|$ and a general hypersurface $D_k \in |\mathcal{O}_X(k)|$. Setting $Y := H_1 \cap \dots \cap H_{n-2}$, Theorem 4.3 yields a canonical splitting $S_{\mathcal{E}(m)}|_{C_{k}}\cong\mathcal{T}\oplus M_{V}$, where $\mathcal{T}:=(W_{Y}\oplus K_{m,c})\otimes\mathcal{O}_{C_{k}}$ is a purely trivial vector bundle. Let $\pi$ denote the composition of the inclusion $\mathcal{F}|_{C_{k}}\hookrightarrow\mathcal{T}\oplus M_{V}$ with the projection onto the second factor, yielding the short exact sequence on $C_{k}$:
\[
0\longrightarrow\mathcal{F}_{1}\longrightarrow\mathcal{F}|_{C_{k}}\xrightarrow{\pi}\mathcal{I}\longrightarrow0,
\]
where $\mathcal{I}:=\mathrm{im}(\pi)\subset M_{V}$ and $\mathcal{F}_{1}:=\ker(\pi)\subset\mathcal{T}$. We analyze the two possible behaviors of this projection.

\noindent \textbf{Case 1: The projection is non-zero ($\mathcal{I} \neq 0$).} \\
Assume for contradiction that the global syzygy bundle $S_{\mathcal{E}(m)}$ is not $\mu_H$-semistable, and let $\mathcal{F} \subset S_{\mathcal{E}(m)}$ be its maximal destabilizing subsheaf. By definition, $\mathcal{F}$ is strictly $\mu_H$-semistable on $X$. Because the moving curve $C_k \in \mathcal{H}_k$ is a general complete intersection curve cut out by sufficiently high degree $k = m-c \gg 0$, the classical Mehta-Ramanathan restriction theorem guarantees that the restricted sheaf $\mathcal{F}|_{C_k}$ remains strictly $\mu_{C_k}$-semistable.

By hypothesis, the projection morphism $\pi: \mathcal{F}|_{C_k} \longrightarrow M_V$ is non-zero, meaning its image $\mathcal{I} := \text{im}(\pi)$ is a non-zero coherent sheaf. We can bound the slope of $\mathcal{I}$ using the semistability of its domain and codomain:
\begin{enumerate}
    \item Because $\mathcal{I}$ is a non-zero quotient of the $\mu_{C_k}$-semistable sheaf $\mathcal{F}|_{C_k}$, we have $\mu_{C_k}(\mathcal{F}|_{C_k}) \le \mu_{C_k}(\mathcal{I})$.
    \item Because $\mathcal{I} \subset M_V$ is a non-zero subsheaf of the native curve syzygy bundle and $M_V$ is strictly $\mu_{C_k}$-semistable by Butler's Theorem for $m \ge M_{Butler}$, we have $\mu_{C_k}(\mathcal{I}) \le \mu_{C_k}(M_V)$.
\end{enumerate}
Transitivity establishes a strict upper bound on the restricted destabilizing subsheaf:
\begin{equation}
    \mu_{C_k}(\mathcal{F}|_{C_k}) \le \mu_{C_k}(M_V).
\end{equation}

We now pull this inequality back to the ambient variety $X$. By Lemma~\ref{lem:slope_scaling}, the exact slope scaling under restriction gives $\mu_{C_k}(\mathcal{F}|_{C_k}) = k \cdot \mu_H(\mathcal{F})$. Substituting this into our bound yields $k \cdot \mu_H(\mathcal{F}) \le \mu_{C_k}(M_V)$, which rearranges to:
\begin{equation}
    \mu_H(\mathcal{F}) \le \frac{\mu_{C_k}(M_V)}{k}.
\end{equation}

As $m \to \infty$, we have $k \sim m$, while Lemma~\ref{lem:asymptotic_dim_slope} dictates that $\mu_{C_k}(M_V) \sim -2$. Consequently, the upper bound decays asymptotically at $O(m^{-1})$, specifically bounded by $-\frac{2}{m}$. 

However, the assumption that $\mathcal{F}$ strictly destabilizes $S_{\mathcal{E}(m)}$ mandates that $\mu_H(\mathcal{F}) > \mu_H(S_{\mathcal{E}(m)}) \sim -\frac{n!}{m^{n-1}}$. Combining these bounds forces the following chain of strict inequalities for $m \gg 0$:
\begin{equation}
    -\frac{n!}{m^{n-1}} < \mu_H(\mathcal{F}) \le -\frac{2}{m}.
\end{equation}
For any ambient dimension $n \ge 3$, the leftmost term decays as $O(m^{-(n-1)})$, which is strictly faster than the $O(m^{-1})$ decay of the rightmost bound. Therefore, for $m \gg 0$, we have $-\frac{n!}{m^{n-1}} > -\frac{2}{m}$. It is mathematically impossible for the slope to simultaneously exceed the leftmost term and be bounded above by the rightmost term. This contradiction strictly rules out Case 1.

\textbf{Case 2: The projection is zero ($\mathcal{I}=0$).}\\
Case 2: The projection is zero $(\mathcal{I}=0)$. In this scenario, for our fixed destabilizing twist $m$, the restricted subsheaf lands entirely inside the trivial summand: $\mathcal{F}|_{C_{k}} \subseteq U_{C_{k}} \otimes \mathcal{O}_{C_{k}}$, where
\begin{equation*}
    U_{C_{k}} := \ker\left(H^{0}(X, \mathcal{E}(m)) \longrightarrow H^{0}(C_{k}, \mathcal{E}(m)|_{C_{k}})\right).
\end{equation*}

Let $\mathcal{C}_{univ} \rightarrow \mathcal{H}_{k}$ be the universal family of curves in $|\mathcal{O}_{X}(1)|^{\oplus(n-2)} \oplus |\mathcal{O}_{X}(k)|$. The bundles $\mathcal{F}$, $S_{\mathcal{E}(m)}$, and $M_{V}$ extend naturally to the universal curve, and the projection $\pi$ becomes a homomorphism of vector bundles over this universal family. The condition that the matrix entries of $\pi$ vanish identically on a fiber curve $C$ is a closed condition on the base $\mathcal{H}_{k}$. Since $\mathcal{H}_{k}$ is irreducible and this closed locus contains the generic point (corresponding to our general curve $C_{k}$), the projection $\pi_{C} = 0$ holds for all curves $C$ in a dense Zariski-open subset $\mathcal{V} \subseteq \mathcal{H}_{k}$ (and in fact, over the entire component).

Consider the universal incidence variety $\mathcal{U} = \{(x, C) \in X \times \mathcal{H}_{k} : x \in C\}$ with its standard projections $p_{X}: \mathcal{U} \rightarrow X$ and $p_{\mathcal{H}}: \mathcal{U} \rightarrow \mathcal{H}_{k}$. Because the linear systems $|\mathcal{O}_{X}(1)|$ and $|\mathcal{O}_{X}(k)|$ are basepoint-free on $X$, the incidence projection $p_{X}$ is dominant. Consequently, the image $X_{gen} := p_{X}(p_{\mathcal{H}}^{-1}(\mathcal{V}))$ contains a Zariski-dense open subset of $X$.

Fix a general closed point $x \in X_{gen}$, and define the restricted web of curves passing through $x$ as $\mathcal{V}_{x} := \{C \in \mathcal{V} : x \in C\}$. By construction, $\mathcal{V}_{x}$ is non-empty, and the union of the curves in $\mathcal{V}_{x}$ sweeps out a Zariski-dense open subset of $X$. To verify this explicitly, decompose each curve as $C = Y \cap D_{k}$, where $Y = H_{1} \cap \dots \cap H_{n-2}$ is the surface cut by hyperplanes, and $D_{k} \in |\mathcal{O}_{Y}(k)|$. The family of complete intersection surfaces $Y$ passing through $x$ forms a Zariski-open subset of the parameter space whose union is all of $X$ (since $|\mathcal{O}_{X}(1)|$ is basepoint-free for $n \ge 3$). For each fixed $Y$ passing through $x$, the residual curves $\{Y \cap D_{k} : D_{k} \in |\mathcal{O}_{Y}(k)|, x \in D_{k}\}$ form a linear subsystem of $|\mathcal{O}_{Y}(k)|$ pinned at $x$. Since $|\mathcal{O}_{Y}(k)|$ is very ample, the base locus of this pinned subsystem is exactly $\{x\}$; by Bertini's theorem, their union sweeps out $Y$ entirely. Consequently, the union of all curves in $\mathcal{V}_{x}$ covers a Zariski-dense open subset of $X$.

Passing to the geometric fiber at $x$, the static subspace $\mathcal{F}_{x} \otimes \kappa(x) \subseteq S_{\mathcal{E}(m)} \otimes \kappa(x)$ must map into the vector space $U_{C}$ for every curve $C \in \mathcal{V}_{x}$. Hence:
\begin{equation*}
    \mathcal{F}_{x} \otimes \kappa(x) \subseteq \bigcap_{C \in \mathcal{V}_{x}} U_{C}.
\end{equation*}

Let $s \in H^{0}(X, \mathcal{E}(m))$ be any global section corresponding to an element of this intersection. By the definition of $U_{C}$, the section $s$ vanishes identically on every curve $C \in \mathcal{V}_{x}$. Because the union of these curves covers a dense open subset of $X$, $s$ vanishes on a dense open set of the locally free sheaf $\mathcal{E}(m)$, forcing $s = 0$ globally on $X$. Thus:
\begin{equation*}
    \bigcap_{C \in \mathcal{V}_{x}} U_{C} = \{0\}.
\end{equation*}

This immediately forces $\mathcal{F}_{x} \otimes \kappa(x) = \{0\}$. By Nakayama's Lemma applied to the finitely generated $\mathcal{O}_{X,x}$-module $\mathcal{F}_{x}$, the stalk at $x$ vanishes: $\mathcal{F}_{x} = 0$. Since $x$ was chosen as a general point in $X$, the coherent sheaf $\mathcal{F}$ must vanish on a dense open neighborhood of $X$. Because $\mathcal{F}$ is a subsheaf of a locally free sheaf, it is purely torsion-free. A torsion-free coherent sheaf that vanishes on a dense open set is identically the zero sheaf: $\mathcal{F} = 0$. This mandates $rk(\mathcal{F}) = 0$, which directly contradicts our premise that $r \ge 1$.

Both cases yield explicit mathematical contradictions; we conclude that $S_{\mathcal{E}(m)}$ is $\mu_{H}$-semistable for all $m \ge M$.
\end{proof}

\begin{remark}
The integer \(M\) appearing in Theorem~\ref{thm:main} is explicitly computable from the numerical invariants of the polarized variety \((X,H)\) and the Ulrich bundle \(E\). Concretely, the geometric offset \(c\) and the Butler margin \(B\) are given by explicit formulas
\[
c = \max\Bigl\{0,\ \Bigl\lceil\frac{K_Y\cdot H_Y - \mu_Y}{d_Y}\Bigr\rceil + 1\Bigr\}, \qquad
B = c\,d_Y + \mu_Y - K_Y\cdot H_Y
\]
involving only intersection numbers on the surface \(Y\) (which are determined by the Chern classes of \(E\) and the intersection ring of \(X\)). The threshold \(m_0 = c + \lceil 2/B \rceil\) is then immediate from rational arithmetic. The Mehta-Ramanathan threshold \(k_0\) (so that \(E|_{C_k}\) is slope-semistable for general \(C_k\) of degree \(k \geq k_0\)) is furnished by Flenner's effective version of the restriction theorem \cite[Theorem 7.1.1]{HuyLehn}, which supplies an explicit  bound depending only on \(\mathrm{rk}(E|_Y)\), the discriminant \(\Delta(E|_Y)\), and the polarization \(H_Y\). Finally, the asymptotic threshold \(M_{\mathrm{asymp}}\) is obtained by solving a finite collection of explicit polynomial inequalities that arise from bounding the error terms in the Hirzebruch-Riemann-Roch expansions of Lemma~\ref{lem:asymptotic_dim_slope}; all leading coefficients and lower-order terms in these polynomials are determined by the input data \(r = \mathrm{rk}(E)\), \(d = H^n\), \(n = \dim X\), and the numerical invariants of \(E|_Y\). Consequently,
\[
M = \max\bigl(M_{\mathrm{Butler}}, M_{\mathrm{asymp}}\bigr)
\]
(with \(M_{\mathrm{Butler}} = \max(k_0 + c, m_0)\)) is a concrete, computable integer.
\end{remark}

\end{document}